\documentclass[12pt, reqno]{amsart}

\usepackage[margin=1.1in]{geometry} 

\usepackage{amssymb,upref}
\usepackage{amsmath, amsthm}
\usepackage{enumerate, graphicx, float}
\usepackage{relsize, mathrsfs, upgreek, bbm, eufrak}

\usepackage{color}
\usepackage{esint}

\theoremstyle{plain}

\newtheorem{thm}{Theorem}[section]

\newtheorem{coro}[thm]{Corollary}

\theoremstyle{remark}

\newtheorem{remark}[thm]{Remark}
\numberwithin{equation}{section}

\def\XXint#1#2#3{{\setbox0=\hbox{$#1{#2#3}{\int}$ }
\vcenter{\hbox{$#2#3$ }}\kern-.6\wd0}}

\newcommand{\bee}{\begin{equation}}
\newcommand{\eee}{\end{equation}}
\newcommand{\be}{\begin{equation*}}
\newcommand{\ee}{\end{equation*}}

\newcommand{\eps}{\varepsilon}

\newcommand{\na}{\mathbb{N}}

\newcommand{\rn}{\mathbb{R}^n}

\newcommand{\norm}[2]{\|#1\|_{#2}}

\newcommand{\Ma}{{\mathcal M}_\alpha}

\newcommand{\dx}{\, dx}

\newcommand{\Ainf}{A_\infty(\rn)}

\DeclareMathOperator*{\esssup}{ess\,sup\,}

\DeclareMathOperator*{\essinf}{ess\,inf\,}

\makeatletter
\@namedef{subjclassname@2020}{\textup{2020} Mathematics Subject Classification}
\makeatother

\begin{document}

\subjclass[2020]{Primary 42B25; Secondary 42B37.}

\keywords{Muckenhoupt weights, fractional operators, Sobolev inequalities.}

\address{Diego Maldonado, Kansas State University, Department of Mathematics. 138 Cardwell Hall, Manhattan, KS-66506, USA.} \email{dmaldona@ksu.edu}

\address{Javier Soria, Departamento de An\'alisis Matem\'atico y Matem\'atica Aplicada, Fa\-cul\-tad de Matem\'aticas, Universidad Complutense de Madrid, Plaza de Ciencias 3, 28040 Madrid, Spain and Instituto de Ciencias Matem\'aticas ICMAT (CSIC-UAM-UC3M-UCM), C/ Nicol\'as Cabrera, 13--15, 28049 Madrid, Spain.}
\email{javier.soria@ucm.es}

\title[Weighted inequalities]{On weighted estimates for fractional operators and applications to Hardy-type inequalities}
\author[Diego Maldonado and Javier Soria]{Diego Maldonado$^{*}$ and Javier Soria$^{**}$}

\thanks{$^{*}$Partially supported by Simons Foundation grant MPS-TSM-00007229.$^{**}$Partially supported by grants PID2020-113048GB-I00, PID2024-155917NB-I00, and CEX2019-000904-S, funded by MCIN/AEI/ 10.13039/501100011033, and Grupo UCM-970966.}

\date{\today}

\begin{abstract} New necessary and sufficient conditions for two-weight estimates for fractional maximal and integral operators are established by means of Harnack-type estimates for Muckenhoupt weights. Applications include weighted Hardy inequalities with weights based on the Hardy-Littlewood maximal operator. 
\end{abstract}

\maketitle

\section{Introduction and main results}\label{sec:main:results}

For $0 < \alpha < n$ the fractional maximal operator $\Ma$ is defined as
\begin{equation*}
\Ma(f)(x) := \sup\limits_{Q \ni x} |Q|^{\alpha/n} \fint_Q |f(y)| \, dy,
\end{equation*}
where $Q \subset \rn$ is a cube (always with sides parallel to the coordinate axes) and the notation $\fint_E u$ stands for $\frac{1}{|E|}\int_E u(x) \, dx$; that is, the average of $u$, with respect to Lebesgue measure, over a set $E$ with Lebesgue measure $|E| >0$ (while the notation $u(E)$ will stand for $\int_E u(x) \, dx$). Similarly, $I_\alpha$ denotes the fractional integral operator
$$
I_\alpha(f)(x):= \int_{\rn} \frac{f(y)}{|x-y|^{n-\alpha}}\, dy. 
$$
When $\alpha=0$, $\mathcal{M}_0 =\mathcal{M}$ is the Hardy-Littlewood maximal operator. The literature on two-weight inequalities for $\Ma$ and $I_\alpha$ is vast, but we will base the statements of our main results on the following classical theorem by E.~Sawyer and R.~Wheeden (see  \cite[Theorem~1]{SW1}): For $1 < p \leq q < \infty$ and $0 < \alpha < n$ define
\begin{equation}\label{def:alpha0}
\Theta(\alpha):= \alpha + n \left( \frac{1}{q} - \frac{1}{p}\right),
\end{equation}
then, given nonnegative measurable functions $u$ and $v$ and $r  > 1$, the condition
\begin{equation}\label{SW:p:q:r}
[(u,v)]_{A_{p,q}^{r, \alpha}}:=\sup\limits_Q |Q|^{\Theta(\alpha)/n}\left(\fint_Q u^r  \right)^\frac{1}{rq} \left(\fint_Q  v^{-rp'/p}\right)^\frac{1}{rp'} < \infty
\end{equation}
implies the two-weight strong-type $(p,q)$ for $I_\alpha$
\begin{equation}\label{p:q:I:alpha}
\left(\int_{\rn} |I_\alpha(f)(x)|^q  u(x) \dx \right)^{1/q}\leq C_{n,p,q,\alpha}  [(u,v)]_{A_{p,q}^{r, \alpha}}  \left(\int_{\rn} |f(x)|^p v(x) \dx \right)^{1/p}
\end{equation}
for every measurable function $f$. Recall that for $1 < p < \infty$ a weight $w$ in $\rn$ (that is, $w \in L^1_{\rm{loc}}(\rn)$ with $w \geq 0$ a.e. in $\rn$) is said to belong to the \emph{Muckenhoupt class} $A_p(\rn)$ if 
\begin{equation}\tag*{$(A_p)$}\label{Ap}
[w]_{A_p}:= \sup\limits_{Q} \left(\fint_Q w \right)  \left(\fint_Q w^{\frac{-1}{(p-1)}}  \right)^{p-1} < \infty
\end{equation}
where $Q \subset \rn$ is a cube. The endpoint classes for $p=1$ and $p=\infty$ are defined by
\begin{equation}\tag*{$(A_1)$}\label{def:A1}
[w]_{A_1}:= \sup\limits_{Q} \left(\fint_Q w \right)  \left(\essinf_Q w \right)^{-1} < \infty
\end{equation}
and
\begin{equation}\tag*{$(A_\infty)$}\label{def:Ainf}
[w]_{A_\infty}:= \sup\limits_{Q} \left(\fint_Q w \right)  \exp \left(-\fint_Q \ln w  \right) < \infty.
\end{equation}
As it turns out, $
A_\infty(\rn) = \bigcup_{p\geq1}A_p(\rn)
$
(see for instance \cite[Section~9.3]{GrafakosBook}).

Throughout this article, our basic standing assumption on the pair $(u,v)$ will be that both $u$ and $v^{-p'/p}$ belong to $\Ainf$. In this case, there are constants $N \geq 1$ and $r > 1$, depending only on $[u]_{\Ainf}$, $[v^{-p'/p}]_{\Ainf}$, and $n$, such that  
$$
\left(\fint_Q u^r  \right)^{1/r} \leq N  \left(\fint_Q u  \right) \quad \text{and} \quad \left(\fint_Q  v^{-rp'/p}\right)^{1/r} \leq N \left(\fint_Q  v^{-p'/p}\right), \quad \forall Q,
$$ 
(see for instance \cite[Theorem~7.2.5]{GrafakosBook}) and then \eqref{SW:p:q:r} becomes equivalent to 
\begin{equation}\label{SW:p:q}
[(u,v)]_{A_{p,q}^{\alpha}}:=\sup\limits_Q |Q|^{\Theta(\alpha)/n}\left(\fint_Q u  \right)^{1/q} \left(\fint_Q  v^{-p'/p}\right)^{1/p'} < \infty
\end{equation}
with the estimate $[(u,v)]_{A_{p,q}^{\alpha}} \leq [(u,v)]_{A_{p,q}^{r, \alpha}} \leq N^{1/q + 1/p'} [(u,v)]_{A_{p,q}^{\alpha}}.$ Since $1< p \leq q < \infty$ and $0 < \alpha < n$ we always have $\Theta(\alpha) \leq \alpha <n$. Moreover, we may (and do) assume that $\Theta(\alpha) \geq 0$. Otherwise, Lebesgue's Differentiation Theorem applied in \eqref{SW:p:q} would imply $u \equiv 0$ a.e.

To compare $\Ma$ and $I_\alpha$ when $0 < \alpha < n$, a simple computation yields $\Ma(f)(x) \leq 2^{n-\alpha} I_\alpha(|f|)(x)$ for every measurable $f$ and $x \in \rn$. On the other hand, as B.~Muckenhoupt and R.~Wheeden proved in  \cite[Theorem~1]{MuWheeden}, for every $0 < p < \infty$, $0 < \alpha < n$, and $u \in \Ainf$, there exists $C >0$, depending only on $p$, $\alpha$, $n$, and $[u]_{\Ainf}$ such that
$$
\int_{\rn} |I_\alpha(f)(x)|^p u(x) \dx \leq C \int_{\rn} |\mathcal{M}_\alpha(f)(x)|^p u(x) \dx
$$
for every measurable $f$. Moreover, by \cite[Theorem~1]{SW1}  the inequality \eqref{SW:p:q} turns out to be necessary for \eqref{p:q:I:alpha} to hold (and this does not require $u, v^{-p'/p} \in \Ainf$). Consequently, under the assumption $u, v^{-p'/p} \in \Ainf$ the condition \eqref{SW:p:q} is both necessary and sufficient for the inequality \eqref{p:q:I:alpha} to hold true for either $I_\alpha$ or $\mathcal{M}_\alpha$. This is also true in the case $\alpha =0$ (which implies $\Theta(\alpha) =0$ and hence $p=q$), since the necessity of \eqref{SW:p:q} (with $\Theta(\alpha) =0$ and $p=q$) for the $(u,v)$-weighted strong $(p,p)$-type of the Hardy-Littlewood maximal function has been established by B. Muckenhoupt in \cite[Theorem 1]{Mucken1} and its sufficiency, when $v^{-p'/p} \in \Ainf$, by C. J. Neugebauer in \cite[Theorem 5]{Neuge1}.

The purpose of this note is to provide new, simpler necessary and sufficient conditions for  \eqref{SW:p:q} (and hence for \eqref{p:q:I:alpha}) to hold under the hypothesis $u, v^{-p'/p} \in \Ainf$. Based on such sufficient conditions, examples of pairs $(u,v)$ satisfying \eqref{SW:p:q} will be constructed and then used in several applications to Hardy-type inequalities.

Let us now describe our main results according to the cases $\Theta(\alpha) = 0$ and $\Theta(\alpha) > 0$. When $\Theta(\alpha) = 0$ the Lebesgue Differentiation Theorem again, now applied to \eqref{SW:p:q}, yields the pointwise inequality $u(x)^{1/q} \leq [(u,v)]_{A_{p,q}^{\alpha}} v(x)^{1/p}$ for a.e. $x \in \rn$.  Our first result shows that, conversely, if $u, v^{-p'/p} \in \Ainf$ then the pointwise condition $u^{1/q} \leq C v^{1/p}$ implies \eqref{SW:p:q}. More precisely, 

\begin{thm}\label{thm:SW:Ainf:p:q:alpha0=0} Fix $1 < p \leq q < \infty$ and $0 < \alpha < n$ (with $\alpha=0$ allowed for $\mathcal{M}_0$) such that $\Theta(\alpha) = 0$ (that is, $1/q = 1/p - \alpha/n$). Given $(u,v)$ with $u, v^{-p'/p} \in \Ainf$ suppose that there exists $C_0 > 0$ with
\begin{equation}\label{u:1/q:C:v:1/p}
u(x)^{1/q} \leq C_0 \, v(x)^{1/p}, \quad \text{a.e. } x \in \rn.
\end{equation}
Then \eqref{SW:p:q} holds true and consequently 
\begin{equation*}
\left(\int_{\rn} |I_\alpha(f)(x)|^q  u(x) \dx \right)^{1/q}\leq C_0 C_1 \left(\int_{\rn} |f(x)|^p v(x) \dx \right)^{1/p},
\end{equation*}
for every measurable $f$ (with $I_\alpha$ replaced by $\mathcal{M}_0$ if $\alpha =0$), where $C_1 > 0$ depends only on $n$, $p$, $q$, $\alpha$, $[u]_{\Ainf}$, and $[v^{-p'/p}]_{\Ainf}$.
\end{thm}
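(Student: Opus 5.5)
We need to show that, for every cube $Q$,
$$\Big(\fint_Q u\Big)^{1/q}\Big(\fint_Q v^{-p'/p}\Big)^{1/p'}\le C\,C_0,$$
because with $\Theta(\alpha)=0$ the factor $|Q|^{\Theta(\alpha)/n}$ equals $1$. The strong-type estimate then follows from the discussion preceding the statement. For $I_\alpha$ we use the Sawyer--Wheeden theorem together with the reverse Hölder equivalence between \eqref{SW:p:q:r} and \eqref{SW:p:q}. For $\mathcal M_0$ we use Neugebauer's theorem. The plan is to replace each average by a geometric mean, which is legitimate for $A_\infty$ weights, and then compare the geometric means pointwise using \eqref{u:1/q:C:v:1/p}.

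\textbf{Step 1: bound each average by a geometric mean.} Set $\sigma:=v^{-p'/p}\in\Ainf$. By the definition \ref{def:Ainf},
$$\fint_Q u\le [u]_{A_\infty}\exp\Big(\fint_Q\ln u\Big),\qquad \fint_Q \sigma\le[\sigma]_{A_\infty}\exp\Big(\fint_Q\ln \sigma\Big).$$
The $A_\infty$ condition gives $\ln u,\ln\sigma\in L^1_{\rm loc}$, so these expressions are finite. In any case, the quantity we need to control is then at most
$$[u]_{A_\infty}^{1/q}[\sigma]_{A_\infty}^{1/p'}\exp\Big(\fint_Q\Big(\tfrac1q\ln u-\tfrac1p\ln v\Big)\Big),$$
since $\frac1{p'}\ln\sigma=-\frac1p\ln v$.

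\textbf{Step 2: use the pointwise hypothesis.} By \eqref{u:1/q:C:v:1/p}, $\frac1q\ln u-\frac1p\ln v\le\ln C_0$ a.e. The exponential factor is therefore at most $C_0$, which gives
$$[(u,v)]_{A^\alpha_{p,q}}\le [u]_{A_\infty}^{1/q}[v^{-p'/p}]_{A_\infty}^{1/p'}\,C_0 .$$

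\textbf{Step 3: conclude.} The bound in Step 2 combines with the reverse Hölder estimate $[(u,v)]_{A^{r,\alpha}_{p,q}}\le N^{1/q+1/p'}[(u,v)]_{A^\alpha_{p,q}}$. Inserting this into \eqref{p:q:I:alpha} gives the claim with $C_1=C_{n,p,q,\alpha}N^{1/q+1/p'}[u]_{A_\infty}^{1/q}[v^{-p'/p}]_{A_\infty}^{1/p'}$, which depends only on the stated quantities. When $\alpha=0$ we invoke Neugebauer's sufficiency result instead, tracking its constant.

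\textbf{Main obstacle.} The only delicate point is making sure the $A_\infty$ definition used here, with geometric means, is the one actually in force. The definition \ref{def:Ainf} is exactly the needed inequality, so Step 1 is immediate. What remains is to confirm that the constant in Neugebauer's $\alpha=0$ theorem depends only on the listed parameters. That should hold because his proof runs through the $r$-bumped condition and reverse Hölder.
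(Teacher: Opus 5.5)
Your proof is correct, and it takes a different and more elementary route than the paper.

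The paper uses the Harnack-type characterization of $A_\infty$ in Theorem~\ref{thm:Ainfty:H}. For each cube $Q$ it intersects the sets $H_\eps$ (with $\eps<1/2$) associated with $u$ and $v^{-p'/p}$. This produces a set $H\subset Q$ with $|H|\ge(1-2\eps)|Q|$ on which $u(x)\simeq\fint_Q u$ and $v(x)^{-p'/p}\simeq\fint_Q v^{-p'/p}$. The paper then evaluates $u^{1/q}\le C_0v^{1/p}$ at a single point of $H$.

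You instead use the geometric-mean form of $[w]_{A_\infty}$ directly. This bounds the product of averages by $\exp\bigl(\fint_Q(\frac1q\ln u-\frac1p\ln v)\bigr)$, after which the pointwise hypothesis is simply integrated. Splitting the exponential is legitimate because $\ln^+$ of each weight is locally integrable and nontrivial $A_\infty$ weights are positive a.e.

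What your route buys:
\begin{itemize}
\item It avoids the cited characterization from \cite{Ma} and the auxiliary $\eps$.
\item It gives the explicit bound $[(u,v)]_{A^{\alpha}_{p,q}}\le C_0[u]_{A_\infty}^{1/q}[v^{-p'/p}]_{A_\infty}^{1/p'}$.
\item It extends to Theorem~\ref{thm:SW:Ainf:p:q}. Combined with Jensen's inequality $\exp(\fint_Q\ln w)\le\fint_Q w$, the same computation gives $(\fint_Q u)^{1/q}(\fint_Q v^{-p'/p})^{1/p'}\le[u]_{A_\infty}^{1/q}[v^{-p'/p}]_{A_\infty}^{1/p'}\fint_Q u^{1/q}v^{-1/p}$. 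For $w=u^{1/q}/v^{1/p}\in A_\infty$ it also gives the reverse bound $\fint_Q w\le[w]_{A_\infty}(\fint_Q u)^{1/q}(\fint_Q v^{-p'/p})^{1/p'}$. These are exactly the two comparisons the paper needs there.
\end{itemize}

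What the paper's route buys is more structural information: pointwise two-sided comparability of each weight with its average on most of $Q$. That is the paper's unifying theme, but it is not needed for these statements.

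On your Step~3 concern: the paper also simply cites Sawyer--Wheeden and Neugebauer without tracking constants. The linear dependence on $C_0$ is automatic anyway, by replacing $u$ with $u/C_0^q$, which leaves $[u]_{A_\infty}$ unchanged.
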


For the case $\Theta(\alpha) > 0$ (that is, $1/q > 1/p - \alpha/n$) we prove the following

\begin{thm}\label{thm:SW:Ainf:p:q} Fix $1 < p \leq q < \infty$ and $0 < \alpha < n$ such that $\Theta(\alpha) > 0$. Given $(u,v)$ with $u, v^{-p'/p} \in \Ainf$ the condition $u^{1/q}/v^{1/p} \in L^{n/\Theta(\alpha), \infty}(\rn)$ implies \eqref{SW:p:q} with the estimate
\begin{equation}\label{uv:pq:thm:SW}
[(u,v)]_{A_{p,q}^{\alpha}} \leq C_2 \norm{u^{1/q}/v^{1/p}}{L^{n/\Theta(\alpha), \infty}(\rn)},
\end{equation}
where $C_2 > 0$ depends only on $n$, $\alpha$, $p$, $q$, $[u]_{\Ainf}$, and $[v^{-p'/p}]_{\Ainf}$. In particular, 
$$
\left(\int_{\rn} |I_\alpha(f)(x)|^q  u(x) \dx \right)^{1/q}\leq C_2 \norm{u^{1/q}/v^{1/p}}{L^{n/\Theta(\alpha), \infty}(\rn)} \left(\int_{\rn} |f(x)|^p v(x) \dx \right)^{1/p},
$$
for every measurable $f$. 

If, in addition, $u^{1/q}/v^{1/p} \in \Ainf$, then \eqref{SW:p:q} is equivalent to 
$$
\mathcal{M}_{\Theta(\alpha)}(u^{1/q}/v^{1/p}) \in L^\infty(\rn)
$$ 
with the estimates
$$
\theta \norm{\mathcal{M}_{\Theta(\alpha)}(u^{1/q}/v^{1/p})}{L^\infty(\rn)} \leq [(u,v)]_{A_{p,q}^{\alpha}} \leq \theta^{-1} \norm{\mathcal{M}_{\Theta(\alpha)}(u^{1/q}/v^{1/p})}{L^\infty(\rn)},
$$
where $\theta \in (0,1)$ depends only on $n$, $\alpha$, $p$, $[u]_{\Ainf}$, $[v^{-p'/p}]_{\Ainf}$, and $[u^{1/q}/v^{1/p}]_{\Ainf}$.   
\end{thm}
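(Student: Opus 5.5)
The plan is to reduce the two-factor condition \eqref{SW:p:q} to a single weight $w:=u^{1/q}/v^{1/p}$. The tool is a reverse Jensen (Harnack-type) estimate available for $\Ainf$ weights. For $\sigma:=v^{-p'/p}\in\Ainf$ we have $\fint_Q\sigma\le[\sigma]_{\Ainf}\exp(\fint_Q\ln\sigma)$, and similarly for $u$. Jensen gives the reverse inequality $\exp(\fint_Q \ln u)\le \fint_Q u$. Since $\sigma^{1/p'}=v^{-1/p}$, we get for every cube $Q$
\begin{equation*}
\left(\fint_Q u\right)^{1/q}\left(\fint_Q\sigma\right)^{1/p'}\approx \exp\left(\fint_Q \ln\big(u^{1/q}v^{-1/p}\big)\right)=\exp\left(\fint_Q\ln w\right),
\end{equation*}
with the upper bound constant $[u]_{\Ainf}^{1/q}[\sigma]_{\Ainf}^{1/p'}$ and lower bound constant $1$. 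Hence
\begin{equation*}
[(u,v)]_{A^\alpha_{p,q}}\approx \sup_Q |Q|^{\Theta(\alpha)/n}\exp\left(\fint_Q\ln w\right).
\end{equation*}

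For the first claim I bound the geometric mean by a weak-type quantity. Set $s=n/\Theta(\alpha)$ and fix $0<\varepsilon<s$. By Jensen, $\exp(\fint_Q\ln w)\le(\fint_Q w^\varepsilon)^{1/\varepsilon}$. Kolmogorov's inequality gives $(\fint_Q w^\varepsilon)^{1/\varepsilon}\le C_{\varepsilon,s}|Q|^{-1/s}\norm{w}{L^{s,\infty}}$. Since $|Q|^{-1/s}=|Q|^{-\Theta(\alpha)/n}$, multiplying by $|Q|^{\Theta(\alpha)/n}$ gives \eqref{uv:pq:thm:SW}. The $I_\alpha$ inequality then follows from the Sawyer--Wheeden result and the reverse H\"older equivalence between \eqref{SW:p:q:r} and \eqref{SW:p:q} recalled in the introduction.

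For the second claim, assume also $w\in\Ainf$. Then $\exp(\fint_Q\ln w)$ and $\fint_Q w$ are comparable, with constants $1$ and $[w]_{\Ainf}$. The reduced supremum is therefore comparable to $\sup_Q|Q|^{\Theta(\alpha)/n}\fint_Q w$. This supremum equals $\norm{\mathcal M_{\Theta(\alpha)}w}{L^\infty}$. Indeed, $\mathcal M_{\Theta(\alpha)}w(x)$ is a supremum over cubes containing $x$, so its ess sup (in fact its sup) is exactly this global supremum. Taking $\theta$ to be the reciprocal of the largest constant gives the two-sided estimate.

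The main obstacle is mostly bookkeeping. One point is to check that the $\Ainf$ constant as defined here (via $\exp\fint\ln$) gives the reverse Jensen directly; it does, by definition. The other point is that when $\Theta(\alpha)>0$ the Kolmogorov step needs $\varepsilon<s$, which is always possible, so the constant depends on $n,\alpha,p,q$ as stated. If one wants $C_2$ free of the choice of $\varepsilon$, simply fix $\varepsilon=s/2$.
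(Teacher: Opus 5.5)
Your proof is correct, and it takes a genuinely different, more elementary route than the paper.

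The paper uses the Harnack-type characterization of $A_\infty$ (Theorem~\ref{thm:Ainfty:H}) to find $H\subset Q$ with $|H|\ge(1-2\eps)|Q|$ on which $u$ and $v^{-p'/p}$ are comparable to their $Q$-averages. This gives $(\fint_Q u)^{1/q}(\fint_Q v^{-p'/p})^{1/p'}\simeq\fint_H w\lesssim\fint_Q w$. It then controls $|Q|^{\Theta(\alpha)/n}\fint_Q w$ by Theorem~\ref{thm:Ma:Linfty}, whose proof goes through the rearrangement estimate \eqref{mallessup} and Hardy's inequality. For the converse it intersects a third Harnack set, for $w$, to get $\fint_H w\simeq\fint_Q w$.

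You instead read the reverse Jensen inequality straight off the definition of $[\,\cdot\,]_{A_\infty}$ and use additivity of the logarithm. This gives $[(u,v)]_{A^{\alpha}_{p,q}}\simeq\sup_Q|Q|^{\Theta(\alpha)/n}\exp(\fint_Q\ln w)$, with explicit constants $1$ and $[u]_{A_\infty}^{1/q}[v^{-p'/p}]_{A_\infty}^{1/p'}$ and no hypothesis on $w$. The weak-type bound then needs only Kolmogorov's inequality. The exponent $1$ is already admissible since $n/\Theta(\alpha)>1$, and that case is exactly the easy half of Theorem~\ref{thm:Ma:Linfty}. The hypothesis $w\in A_\infty$ enters only in the lower bound of the second part.

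Your route gives explicit constants and avoids both the Harnack characterization and the rearrangement machinery. It also yields Theorem~\ref{thm:SW:Ainf:p:q:alpha0=0} at once, since $w\le C_0$ implies $\exp(\fint_Q\ln w)\le C_0$. The paper's route offers instead a pointwise picture: $u^{1/q}v^{-1/p}$ is comparable to the product of averages on most of $Q$. Through Theorem~\ref{thm:Ma:Linfty} it also shows that $L^{n/\Theta(\alpha),\infty}(\rn)$ is the largest r.i.\ space that $\mathcal{M}_{\Theta(\alpha)}$ maps into $L^\infty(\rn)$, which the sufficiency claim itself does not need.

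One detail you should state explicitly is that $\ln u,\ln v\in L^1_{\rm loc}$. This follows from $[u]_{A_\infty},[v^{-p'/p}]_{A_\infty}<\infty$ together with local integrability of $u$ and $v^{-p'/p}$. It is what makes the factorization $\exp(\fint_Q\ln u)^{1/q}\exp(\fint_Q\ln v^{-p'/p})^{1/p'}=\exp(\fint_Q\ln w)$ legitimate.
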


\begin{remark} Notice that given $1 < p \leq q < \infty$ and $0 < \alpha < n$ with $\Theta(\alpha) = 0$, by setting $t := q/p' + 1$ the condition \eqref{SW:p:q:r} follows from \eqref{u:1/q:C:v:1/p} together with the assumption $u \in A_t(\rn)$. Indeed, given $u \in A_t(\rn)$ there are constants $r > 1$ and $N \geq 1$, depending only on $[u]_{A_t(\rn)}$, $t$, and $n$, such that $u^r \in A_t(\rn)$ with $[u^r]_{A_t(\rn)} \leq N [u]_{A_t(\rn)}^r$ (see for instance \cite[Theorem 9.2.5]{GrafakosBook}). Hence,  by using that $v^{-1/p} \leq C u^{-1/q}$ a.e. from \eqref{u:1/q:C:v:1/p}, we can write
\begin{align*}
\left(\fint_Q u^r  \right)^\frac{1}{rq} \left(\fint_Q  v^{-rp'/p}\right)^\frac{1}{rp'} & \leq C_0 \left(\fint_Q u^r  \right)^\frac{1}{rq} \left(\fint_Q  u^{-rp'/q}\right)^\frac{1}{rp'}\\
& = C_0 \left[\left(\fint_Q u^r  \right) \left(\fint_Q  u^{-rp'/q}\right)^\frac{q}{p'} \right]^\frac{1}{rq} \leq C_0 [u^r]_{A_t(\rn)}^\frac{1}{rq}
\end{align*}
and \eqref{SW:p:q:r} follows with $[(u,v)]_{A_{p,q}^{r, \alpha}} \leq C_0 N^\frac{1}{rq} [u]_{A_t(\rn)}^{1/q}.$ Similarly, the inequality \eqref{u:1/q:C:v:1/p} now together with the condition $v^{-p'/p} \in A_{t'}(\rn)$ (where $(t'-1)(t-1) =1$) also quantitatively implies \eqref{SW:p:q:r}. Thus, Theorem \ref{thm:SW:Ainf:p:q:alpha0=0} allows the passage from the assumption $u \in A_t(\rn)$ or $v^{-p'/p} \in A_{t'}(\rn)$, for the precise index $t := q/p' + 1$, to $u, v^{-p'/p} \in \Ainf$. 
\end{remark}

\begin{remark} To the best of our knowledge, Theorem \ref{thm:SW:Ainf:p:q} seems to be the first to establish sufficient conditions for the $(u,v)$-based estimates \eqref{p:q:I:alpha} phrased in terms of the quotient~$u^{1/q}/v^{1/p}$. 
\end{remark}

The rest of the article is organized as follows: Section~\ref{secc:prelim} includes an account on basic properties of Muckenhoupt weights.  In Section~\ref{sec:Ma:Linfty} we identify the largest rearrangement-invariant space $D_\alpha$ so that the fractional maximal operator $\mathcal{M}_\alpha:D_\alpha\rightarrow L^\infty(\mathbb R^n)$ is bounded. This boundedness property is crucial to the proof of Theorem~\ref{thm:SW:Ainf:p:q}. In Section~\ref{secc:proof:main:thms} we prove our main results Theorems \ref{thm:SW:Ainf:p:q:alpha0=0} and \ref{thm:SW:Ainf:p:q}. In Section~\ref{secc:examples:u:Mg} we construct examples of weight pairs $(u,v)$ satisfying \eqref{SW:p:q} where $u$ takes the form $u= \mathcal{M}(g)^{\Theta(\alpha)q/n} v^{q/p}$ with $g \in L^1(\rn)$. Finally, in Section~\ref{secc:app:Hardy} we apply Theorems \ref{thm:SW:Ainf:p:q:alpha0=0} and \ref{thm:SW:Ainf:p:q} with the classes of weights constructed in Section \ref{secc:examples:u:Mg}  to produce new families of weighted Hardy inequalities, including some extensions of the classical Caffarelli-Kohn-Nirenberg inequalities (see Theorem \ref{thm:Mg:p<q:u:A1:app} and Corollary~\ref{coro:Mg:p<q:u:A1:app}). 

\section{Preliminaries}\label{secc:prelim}

Given two nonnegative quantities or functions $A$ and $B$, we will frequently use the notation $A \lesssim B$ to indicate the presence of an implied constant $C > 0$ such that $A \leq C B$ and the nature of the constant $C >0$ will be described in each case. We write $A \simeq B$ if $A \lesssim B$ and $B \lesssim A$.

\subsection{On Muckenhoupt weights}\label{sec:Ap:weights} Typical examples of $A_p$-weights are the locally integrable powers of $|x|$. More precisely, for $1 < p < \infty$, the weight $|x|^a \in A_p(\rn)$ if and only if $-n < a < n (p-1)$ and $|x|^a \in A_1(\rn)$ if and only if $-n < a \leq 0$, see for instance  \cite[p.~286]{GrafakosBook}.  In the construction of $\Ainf$ weights in Section~\ref{secc:examples:u:Mg}, we will use the facts that if $g \in L^1_{\rm{loc}}(\rn)$ with $\mathcal{M}(g)(x) < \infty$ for a.e. $x \in \rn$ and $\theta \in (0,1)$, then $\mathcal{M}(g)^\theta \in A_1(\rn)$ with the estimate
\begin{equation}\label{Mg:theta:A1}
[\mathcal{M}(g)^\theta]_{A_1(\rn)} \leq \frac{C_n}{1-\theta},
\end{equation}
for a dimensional constant $C_n >0$, see for instance \cite[Theorem~9.2.7]{GrafakosBook}. Also, if $w_1, w_2 \in A_1(\rn)$ and $\tau > 1$, then
\begin{equation}\label{w1w2tau:A1}
w_1 w_2^{-(\tau-1)} \in A_\tau(\rn),
\end{equation}
see for instance \cite[Exercise 7.1.2]{GrafakosBook}.

During the proofs of Theorems \ref{thm:SW:Ainf:p:q:alpha0=0} and \ref{thm:SW:Ainf:p:q} (in Section~\ref{secc:proof:main:thms}) we will repeatedly make use of the following characterization of $\Ainf$ established in \cite[Theorem~1.1]{Ma}. 

\begin{thm}\label{thm:Ainfty:H} Fix a weight $w$ in $\rn$ with $w(Q) > 0$ for every cube $Q \subset \rn$. Then the following are quantitatively equivalent:
\begin{enumerate}[(i)]
\item\label{w:Ainfty} $w \in A_\infty(\rn)$,
\item\label{H:every:eps} for every $\eps \in (0,1)$ there exists $C_\eps \geq 1$ such that for every cube $Q \subset \rn$ there is a set $H_\eps \subset Q$ satisfying
\begin{equation*}
\min\left\{\frac{|H_\eps|}{|Q|}, \frac{w(H_\eps)}{w(Q)}\right\} \geq (1-\eps) \quad \text{ and } \quad \esssup\limits_{H_\eps} w \leq C_\eps \essinf\limits_{H_\eps} w.
\end{equation*}
\end{enumerate}
\end{thm}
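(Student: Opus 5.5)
The two directions use different tools: (ii)$\Rightarrow$(i) goes through the standard ``$(\alpha,\beta)$'' characterization of $\Ainf$, and (i)$\Rightarrow$(ii) builds $H_\eps$ as a level set of $w$ around the geometric mean $\exp\left(\fint_Q \ln w\right)$. Throughout I would use freely the classical quantitative equivalences for $\Ainf$ (see \cite{GrafakosBook}): $\Ainf$ membership, a reverse Hölder inequality, the estimate $w(E)/w(Q)\le C(|E|/|Q|)^{\delta}$ for $E\subset Q$, and the $(\alpha,\beta)$-condition.

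\emph{(ii)$\Rightarrow$(i).} I only need the hypothesis for one value, say $\eps=1/4$; write $C=C_{1/4}$ and $H=H_{1/4}\subset Q$. Fix $E\subset Q$ with $|E|\le \alpha|Q|$, where $\alpha:=1/(4C)$. Split
\[
w(E)\le w(E\cap H)+w(Q\setminus H)\le w(E\cap H)+\tfrac14 w(Q).
\]
On $H$ we have $w\le C\,\essinf_H w\le C\,w(H)/|H|$. Hence
\[
w(E\cap H)\le C\,|E|\,\frac{w(Q)}{|H|}\le C\alpha\,\frac{|Q|}{|H|}\,w(Q)\le \frac{4C\alpha}{3}\,w(Q)=\frac13\,w(Q).
\]
Adding the two pieces gives $w(E)\le \tfrac{7}{12}\,w(Q)$. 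The classical $(\alpha,\beta)$-characterization of $\Ainf$ then yields $w\in\Ainf$, with $[w]_{\Ainf}$ controlled by $C_{1/4}$ and $n$.

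\emph{(i)$\Rightarrow$(ii).} Fix $Q$, put $m:=\exp\left(\fint_Q\ln w\right)$, and for a parameter $\lambda>1$ set
\[
H:=\{x\in Q:\ \lambda^{-1}m\le w(x)\le \lambda m\}.
\]
Then $\esssup_H w\le \lambda^2\essinf_H w$, so I will take $C_\eps:=\lambda^2$. It remains to choose $\lambda$ so that both $|Q\setminus H|$ and $w(Q\setminus H)$ are at most $\eps$ times $|Q|$ and $w(Q)$ respectively. The upper tail is handled by Chebyshev and the definition of $[w]_{\Ainf}$:
\[
|\{x\in Q: w>\lambda m\}|\le \frac{w(Q)}{\lambda m}\le \frac{[w]_{\Ainf}}{\lambda}\,|Q|.
\]
For the lower tail, note that $\{w<m/\lambda\}=\{\ln w-\fint_Q\ln w<-\ln\lambda\}$. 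Since $w\in\Ainf$ implies $w\in A_p$ for some $p$, we have $\ln w\in BMO$ with norm controlled by $[w]_{\Ainf}$ and $n$. Chebyshev then gives
\[
|\{x\in Q: w<m/\lambda\}|\le \frac{\norm{\ln w}{BMO}}{\ln\lambda}\,|Q|.
\]
Together, $|Q\setminus H|\le \eta(\lambda)|Q|$ with $\eta(\lambda)\to0$ as $\lambda\to\infty$, uniformly in $Q$. The quantitative $\Ainf$ estimate (from reverse Hölder) then gives $w(Q\setminus H)\le C\eta(\lambda)^{\delta}w(Q)$. Choosing $\lambda$ large enough, depending only on $\eps$, $n$ and $[w]_{\Ainf}$, makes both ratios at least $1-\eps$.

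The step that needs the most care is the lower tail $\{w<m/\lambda\}$. Chebyshev applied to $w$ itself is of no use there, so the argument has to pass through the BMO bound for $\ln w$, or alternatively through $w^{-1/(p-1)}$ via the $A_p$ condition. I also have to track the constants so that $C_\eps$ depends only on $\eps$, $n$ and $[w]_{\Ainf}$, which is what makes the equivalence quantitative.
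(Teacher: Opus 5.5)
The paper does not prove this theorem; it quotes it from \cite[Theorem~1.1]{Ma}. There is therefore no in-paper argument to compare yours with, and I can only judge it on its own. It is correct in both directions.

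\textbf{(ii)$\Rightarrow$(i).} Your bound $w(E)\le \frac{7}{12}w(Q)$ for $E\subset Q$ with $|E|\le |Q|/(4C_{1/4})$ is exactly the Coifman--Fefferman $(\alpha,\beta)$ condition, and the arithmetic checks out. The same computation works for any single $\eps\in(0,1)$: with $\alpha:=(1-\eps)^2/(2C_\eps)$ you get $w(E)\le \frac{1+\eps}{2}\,w(Q)$. So your argument in fact shows that (ii) for one value of $\eps$ already forces (ii) for every $\eps$, a small strengthening worth recording.

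\textbf{(i)$\Rightarrow$(ii).} Centering the level set at the geometric mean $m$ suits the paper's definition of $[w]_{\Ainf}$, which controls the upper tail directly.

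The detour through $A_p$ and $BMO$ for the lower tail is unnecessary. Since $\fint_Q \ln(w/m)=0$ and $(\ln t)^+\le t$,
\[
\fint_Q \Bigl(\ln \frac{w}{m}\Bigr)^- = \fint_Q \Bigl(\ln\frac{w}{m}\Bigr)^+ \le \fint_Q \frac{w}{m}\le [w]_{\Ainf},
\]
so $|\{x\in Q: w(x)<m/\lambda\}|\le [w]_{\Ainf}|Q|/\ln\lambda$ with no further machinery.

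Likewise, Jensen gives $m\le \fint_Q w$. Hence $w(\{x\in Q: w<m/\lambda\})\le m|Q|/\lambda\le w(Q)/\lambda$, so the lower tail is trivial in $w$-measure as well. The only step that genuinely needs a deeper $A_\infty$ fact is the $w$-measure of the upper tail $\{w>\lambda m\}$. There the power estimate $w(E)/w(Q)\le C(|E|/|Q|)^\delta$, i.e.\ reverse H\"older, is really used.

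Your route gives a short proof assembled from classical characterizations. The price is that the dependence of $C_\eps$ on $[w]_{\Ainf}$, and conversely, is only as explicit as the constants in the reverse H\"older inequality and the $(\alpha,\beta)$ theorem you invoke.
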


\begin{remark}\label{rmk:w:in:H} Given $w \in A_\infty(\rn)$, $\eps \in (0,1)$, a cube $Q \subset \rn$, and $H_\eps \subset Q$ as in Theorem~\ref{thm:Ainfty:H} \eqref{H:every:eps}, for a.e. $x \in H_\eps$ we can write
\begin{align*}
w(x) &\leq \esssup\limits_{H_\eps} w \leq C_\eps \essinf\limits_{H_\eps} w \leq C_\eps \fint_{H_\eps} w \leq \frac{C_\eps}{(1-\eps)} \fint_{Q} w\\
& \leq \frac{C_\eps}{(1-\eps)^2}  \fint_{H_\eps} w \leq \frac{C_\eps}{(1-\eps)^2} \esssup\limits_{H_\eps} w \leq \frac{C_\eps^2}{(1-\eps)^2}\essinf\limits_{H_\eps} w \leq \frac{C_\eps^2}{(1-\eps)^2} w(x). 
\end{align*}
That is,
\begin{equation}\label{w:like:ave:Q}
w(x) \simeq   \fint_{H_\eps} w \simeq  \fint_{Q} w \quad \text{a.e. } x \in H_\eps,
\end{equation}
where the implied constants depend only on $\eps$, $[w]_{A_\infty}$, and $n$. 
\end{remark}

\begin{remark}\label{rmk:N:weights} Fix $N$ weights $w_1, \ldots, w_N \in \Ainf$, $\eps \in (0,1)$, and a cube $Q \subset \rn$. If $H_\eps^j$ denotes the set $H_\eps \subset Q$ from Theorem~\ref{thm:Ainfty:H} \eqref{H:every:eps}, corresponding to each $w_j \in \Ainf$, $j=1, \ldots, N$, then
$$
|Q \setminus \cap_{j=1}^N H_\eps^j| = |\cup_{j=1}^N (Q \setminus H_\eps^j)| \leq \sum\limits_{j=1}^N |Q \setminus H_\eps^j| \leq \eps N |Q|,
$$
since $|Q \setminus H_\eps^j| \leq \eps |Q|$ for every $j=1, \ldots, N.$ Thus, by choosing $\eps < 1/N$, 
\begin{equation}\label{meas:H:Q}
\Biggl|\bigcap_{j=1}^N H_\eps^j\Biggr| \geq (1-\eps N) |Q|.
\end{equation}
\end{remark}

\subsection{Reverse H\"older classes} For $1 < s < \infty$ we write $w \in RH_s(\rn)$ if
\begin{equation*}
[w]_{RH_s}:= \sup\limits_{Q} \left(\fint_Q w(x)^s \dx \right)^{1/s}  \left(\fint_Q w(x) \dx \right)^{-1} < \infty.
\end{equation*}
As it turns out (see for instance \cite[Section~9.3]{GrafakosBook}),
$
\bigcup_{s > 1} RH_s(\rn) = A_\infty(\rn).
$

For $s=\infty$ we write $w \in RH_\infty(\rn)$ if 
\begin{equation*}
[w]_{RH_\infty}:= \sup\limits_{Q} \left(\esssup\limits_{Q} w\right)  \left(\fint_Q w(x) \dx \right)^{-1} < \infty.
\end{equation*}
It is a known fact that the class $RH_\infty(\rn)$ is invariant under positive powers, that is, the implication
\begin{equation}\label{RHinfty:powers}
w \in RH_\infty(\rn), \ell > 0 \Rightarrow w^\ell \in RH_\infty(\rn)
\end{equation}
holds true with $[w^\ell]_{RH_\infty(\rn)}$ depending only on $[w]_{RH_\infty(\rn)}$, $\ell$, and $n$ (see \cite[Theorem~4.2]{CUN}) and that the inclusion $RH_\infty(\rn) \subset \bigcap_{s > 1} RH_s(\rn)$ is proper (see \cite[p.~2948]{CUN}). In addition, the class $\bigcap_{s > 1} RH_s(\rn)$ constitutes the multipliers of $\Ainf$. In particular, the implication
\begin{equation}\label{multipliers:Ainfty}
w \in \Ainf, u \in \bigcap\limits_{s > 1} RH_s(\rn) \Rightarrow wu \in \Ainf
\end{equation}
holds true with $[wu]_{\Ainf} \leq [w^{\tau}]_{\Ainf}^{1/\tau} [u^{\tau'}]_{\Ainf}^{1/{\tau'}}$ for some $\tau > 1$ depending only on $[w]_{\Ainf}$ and $n$, see for instance \cite[Theorem~3.5]{JN}. 

\begin{remark}\label{comments:A1:RHinfty}  $RH_\infty(\rn)$ and $A_1(\rn)$. The classes $RH_\infty(\rn)$ and $A_1(\rn)$ are related as follows: if $w \in A_1(\rn)$ then there exists $\theta_0 > 0$ (depending only on $[w]_{A_1(\rn)}$ and $n$) such that $w^{-\theta_0} \in RH_\infty(\rn)$. Conversely, if $w \in RH_\infty(\rn)$, then there exists $\theta_1 >0$ (depending only on $[w]_{RH_\infty(\rn)}$ and $n$) such that $w^{-\theta_1} \in A_1(\rn)$, see \cite[Corollary 4.5]{CUN}. 

In particular, one has that $|x|^\delta \in RH_\infty(\rn)$ for every $\delta \geq 0$ since by fixing $0 < q < n$ the weight $|x|^{-q}$ belongs to $A_1(\rn)$, then $|x|^{\theta} \in RH_\infty(\rn)$ for some $\theta >0$. Finally, \eqref{RHinfty:powers} applied with $\ell = \delta/\theta$ yields $|x|^\delta \in RH_\infty(\rn)$. 
\end{remark}

\section{On $L^\infty(\rn)$ as the range for $\Ma$}\label{sec:Ma:Linfty}

Given $0<\alpha<n$, in this section  we will characterize the optimal domain $D_\alpha$, for the fractional maximal operator $\mathcal{M}_\alpha$, so that the boundedness $\mathcal{M}_\alpha:D_\alpha\rightarrow L^\infty(\mathbb R^n)$ holds. For this purpose, we recall the definition of the nonincreasing rearragment of $f\in L^1_{\rm loc}(\mathbb R^n)$, denoted as $f^*$ (see \cite{BeSh}):
$$
f^*(t)=\inf\{s>0:\big|\{x\in\mathbb R^n:|f(x)|>s\}\big|<t\}, \quad \forall t >0.
$$
We will also need the following result   \cite[Theorem~1.1]{CKOP},  where the authors show that if $f\in L^1_{\rm loc}(\mathbb R^n)$, then
\begin{equation}\label{mallessup}
(\mathcal{M}_\alpha f)^*(t)\lesssim \sup_{t<s<\infty}s^{\alpha/n}f^{**}(s), \quad \forall t > 0, 
\end{equation}
and, there exists  a function $g\in L^1_{\rm loc}(\mathbb R^n)$, with $g^*=f^*,$  such that
\begin{equation}\label{suplesmal}
\sup_{t<s<\infty}s^{\alpha/n}f^{**}(s)\lesssim(\mathcal{M}_\alpha g)^*(t),
\end{equation}
where  $f^{**}(t)=\frac1t\int_0^tf^*(s)\,ds$ and  the implied constants depend only on $\alpha$ and $n$.
\medskip

In view of this, it is reasonable to introduce the following rearrangement invariant structure: Given a  set $X$, satisfying that $\big(L^1(\mathbb R^n)\cap L^\infty(\mathbb R^n)\big)\subset X \subset L^1_{\rm loc}(\mathbb R^n)$, we define
$$
\widetilde{X}=\underset{Y \subset X\text{ is an r.i.\ space}}{\bigcup Y},
$$
where  $(Y,\|\cdot\|)$ is a rearrangement invariant space (r.i.)\ if it satisfies the following conditions (see \cite[Definitions I.1.1 and II.4.1]{BeSh}):
\medskip

\noindent
$Y=\{f {\text{ is measurable and }} \|f\|<\infty\}\/$ with a norm
$\|\cdot\|$ (called  Banach function norm) for which, given
 $f,g,f_n\in Y,\;A\subset \mathbb R^n\/$ measurable:
 \medskip
 
\begin{enumerate}
\item[(i)]  $\|f\|\le\|g\|\/$, if $|f|\le|g|\/$,\medskip

\item[(ii)]  $0\le f_n\le f_{n+1}\to f$, a.e. $\Rightarrow\|f_n\|\to\|f\|\/$,\medskip

\item[(iii)]  $\chi_E\in Y\/$, if $|E|<\infty\/$,\medskip

\item[(iv)]  $\displaystyle\int_E|f(x)|\,dx\le C_E\|f\|\/$, if $|E|<\infty\/$, and \medskip

\item[(v)] $\|f\|=\|g\|$, if $f^*=g^*$.
\end{enumerate}
Since, for every r.i. space $Y$, we have that \cite[II.Theorem~6.6]{BeSh},
$$
\big(L^1(\mathbb R^n)\cap L^\infty(\mathbb R^n)\big)\subset Y \subset \big(L^1(\mathbb R^n)+ L^\infty(\mathbb R^n)\big),
$$  
it is then clear that 
$$
\big(L^1(\mathbb R^n)\cap L^\infty(\mathbb R^n)\big)\subset \widetilde{X}\subset X\cap \big(L^1(\mathbb R^n)+ L^\infty(\mathbb R^n)\big).
$$
Also,  if $f\in\widetilde{X}$ and $f^*=g^*$, then $g\in \widetilde{X}$. Moreover, if $X$ is already an r.i. space, then $\widetilde{X}=X$. Let us define 
\begin{equation}\label{xalp}
X_\alpha :=\Big\{f\in L^1_{\rm loc}(\mathbb R^n):\mathcal{M}_\alpha f\in L^\infty(\mathbb R^n)\Big\}.
\end{equation}
The main result of this section is the following:

\begin{thm}\label{thm:Ma:Linfty}
For $0<\alpha<n$, let $X_\alpha$ be as in \eqref{xalp}. Then, $\widetilde{X_\alpha}=L^{n/\alpha,\infty}(\mathbb R^n)$. Hence, $L^{n/\alpha,\infty}(\mathbb R^n)$ is the largest r.i.\ space $Y$ for which $\mathcal{M}_\alpha:Y\rightarrow L^\infty(\mathbb R^n)$ is bounded.
\end{thm}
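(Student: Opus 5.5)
The proof splits into two inclusions, \(L^{n/\alpha,\infty}(\rn) \subset \widetilde{X_\alpha}\) and \(\widetilde{X_\alpha} \subset L^{n/\alpha,\infty}(\rn)\), and both go through the rearrangement estimates \eqref{mallessup} and \eqref{suplesmal}. Throughout we use the quasinorm \(\sup_{t>0} t^{\alpha/n} f^{**}(t)\). Since \(n/\alpha > 1\), this is equivalent to the norm of \(L^{n/\alpha,\infty}(\rn)\), and it makes that space an r.i.\ space in the sense of (i)--(v).

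\emph{First inclusion.} Take \(f \in L^{n/\alpha,\infty}(\rn)\). By \eqref{mallessup}, for every \(t>0\),
\[
(\Ma f)^*(t) \lesssim \sup_{s>t} s^{\alpha/n} f^{**}(s) \le \norm{f}{L^{n/\alpha,\infty}}.
\]
Letting \(t \to 0^+\) gives \(\norm{\Ma f}{L^\infty} = (\Ma f)^*(0^+) \lesssim \norm{f}{L^{n/\alpha,\infty}}\). Hence the r.i.\ space \(L^{n/\alpha,\infty}(\rn)\) is contained in \(X_\alpha\), so it is contained in \(\widetilde{X_\alpha}\). The same estimate shows that \(\Ma\) is bounded from it into \(L^\infty(\rn)\).

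\emph{Second inclusion.} Let \(Y \subset X_\alpha\) be an r.i.\ space and \(f \in Y\). The key is to use rearrangement invariance to reach the extremal function of \eqref{suplesmal}. By that estimate there is \(g\) with \(g^* = f^*\) such that
\[
\sup_{s>t} s^{\alpha/n} f^{**}(s) \lesssim (\Ma g)^*(t) \quad \text{for all } t>0.
\]
Property (v), together with the fact that membership in \(Y\) is determined by \(\norm{\cdot}{Y}\), gives \(g \in Y \subset X_\alpha\). Therefore \((\Ma g)^*(t) \le \norm{\Ma g}{L^\infty} < \infty\). It follows that \(t^{\alpha/n} f^{**}(t) \le \sup_{s>t} s^{\alpha/n} f^{**}(s) \lesssim \norm{\Ma g}{L^\infty}\) for every \(t\), so \(f \in L^{n/\alpha,\infty}(\rn)\). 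Thus \(Y \subset L^{n/\alpha,\infty}(\rn)\), and taking the union over \(Y\) gives \(\widetilde{X_\alpha} \subset L^{n/\alpha,\infty}(\rn)\).

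\emph{Maximality.} If \(Y\) is an r.i.\ space on which \(\Ma: Y \to L^\infty(\rn)\) is bounded, then in particular \(Y \subset X_\alpha\). The second inclusion then gives \(Y \subset L^{n/\alpha,\infty}(\rn)\), so \(L^{n/\alpha,\infty}(\rn)\) is the largest such space.

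The main obstacle is the passage from \(f\) to \(g\) in the second inclusion. The function \(g\) produced by \eqref{suplesmal} is not \(f\) itself, so the argument needs the full rearrangement invariance of \(Y\) and not just \(Y \subset X_\alpha\). This is exactly why the theorem is stated for \(\widetilde{X_\alpha}\) rather than for \(X_\alpha\), which is not itself rearrangement invariant. A minor supporting point is verifying that the \(f^{**}\) quasinorm makes \(L^{n/\alpha,\infty}(\rn)\) satisfy the Banach function norm axioms (i)--(v), which holds because \(n/\alpha > 1\).
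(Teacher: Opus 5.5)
Your proposal is correct and follows the paper's proof essentially step for step. The first inclusion goes through \eqref{mallessup}, and the second uses the equimeasurable extremal $g$ from \eqref{suplesmal} together with rearrangement invariance. The only difference is that you take the equivalence of $\sup_{t>0} t^{\alpha/n} f^{**}(t)$ with the $L^{n/\alpha,\infty}$ quasinorm as standard, whereas the paper derives it from the boundedness of the Hardy operator on $L^{n/\alpha,\infty}(\mathbb{R}^+)$.
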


\begin{proof}
Let us first observe that $\widetilde{X_\alpha}$ is well defined; that is, $\big(L^1(\mathbb R^n)\cap L^\infty(\mathbb R^n)\big)\subset X_\alpha$. In fact, if $f\in \big(L^1(\mathbb R^n)\cap L^\infty(\mathbb R^n)\big)$, then for every cube $Q$:
$$
|Q|^{\alpha/n-1}\int_Q|f(x)|\,dx\le \|f\|_\infty, \text{ if }  |Q|<1,
$$
and
$$
|Q|^{\alpha/n-1}\int_Q|f(x)|\,dx\le \|f\|_1, \text{ if }  |Q|\ge1.
$$
Thus,  $\|\mathcal{M}_\alpha f\|_\infty\le\|f\|_{L^1(\mathbb R^n)\cap L^\infty(\mathbb R^n)}:=\max\{\|f\|_{L^1(\mathbb R^n)},\|f\|_{L^\infty(\mathbb R^n)}\}$.
\medskip

Let $h\in  L^1_{\rm loc}(\mathbb R^+)$ and define the Hardy operator  $Ah(t)=\frac1t\int_0^th(s)\,ds$. It is well known that (see \cite[p.~240]{HLP}), for $1<p\le \infty$,
$A:L^p(\mathbb R^+)\rightarrow L^p(\mathbb R^+)$  and hence, for every $1<p<\infty$, $A:L^{p,\infty}(\mathbb R^+)\rightarrow L^{p,\infty}(\mathbb R^+)$. Lets us now prove the following remark:
\begin{equation}\label{equivnorm}
f\in L^{n/\alpha,\infty}(\mathbb R^n)\iff \sup_{t<s<\infty}s^{\alpha/n}f^{**}(s)\in L^\infty(\mathbb R^+).
\end{equation}
In fact, since $f^*\le f^{**}=A(f^*)$, 
\begin{align*}
\sup_{t<s<\infty}s^{\alpha/n}f^{**}(s)\in L^\infty(\mathbb R^+)& \iff A(f^*)\in L^{n/\alpha,\infty}(\mathbb R^+)\\
&\quad\Rightarrow f^*\in L^{n/\alpha,\infty}(\mathbb R^+)\iff f\in L^{n/\alpha,\infty}(\mathbb R^n).
\end{align*}
Conversely, if  $f\in L^{n/\alpha,\infty}(\mathbb R^n)$, since $1<n/\alpha<\infty$, 
\begin{align*}
\Big\|\sup_{t<s<\infty}s^{\alpha/n}f^{**}(s)\Big\|_{L^\infty(\mathbb R^+)}&=\sup_{0<s<\infty}s^{\alpha/n}f^{**}(s)
=\|A(f^*)\|_{L^{n/\alpha,\infty}(\mathbb R^+)}\\
&\lesssim\|f^*\|_{L^{n/\alpha,\infty}(\mathbb R^+)}=\|f\|_{L^{n/\alpha,\infty}(\mathbb R^n)}<\infty.
\end{align*}
To finish, if $f\in L^{n/\alpha,\infty}(\mathbb R^n)$, then using \eqref{mallessup} and \eqref{equivnorm}, we have that 
$$
\|\mathcal{M}_\alpha f\|_{L^\infty(\mathbb R^n)}\lesssim \|f\|_{L^{n/\alpha,\infty}(\mathbb R^n)},
$$
and hence $f\in X_\alpha$. Thus, $L^{n/\alpha,\infty}(\mathbb R^n)$ is an r.i. space contained in $X_\alpha$ and, therefore,  $L^{n/\alpha,\infty}(\mathbb R^n)\subset \widetilde{X_\alpha}$. 

Conversely, if $f\in \widetilde{X_\alpha}$, consider $g$ as in \eqref{suplesmal}. Then $g^*=f^*$, and hence $g\in \widetilde{X_\alpha}$. Thus, $\mathcal{M}_\alpha g\in L^\infty(\mathbb R^n)$ and, using both \eqref{mallessup} and \eqref{suplesmal},
$$
\Big\|\sup_{t<s<\infty}s^{\alpha/n}f^{**}(s)\Big\|_{L^\infty(\mathbb R^+)}\simeq \|\mathcal{M}_\alpha g\|_{L^\infty(\mathbb R^n)} <\infty.
$$
Finally, \eqref{equivnorm} gives us that $f\in L^{n/\alpha,\infty}(\mathbb R^n)$.\end{proof}

\section{Proofs of Theorems \ref{thm:SW:Ainf:p:q:alpha0=0} and \ref{thm:SW:Ainf:p:q}}\label{secc:proof:main:thms}

\subsection{Proof of Theorem~\ref{thm:SW:Ainf:p:q:alpha0=0}} Fix $u, v^{-p'/p} \in \Ainf$. By Remark \ref{rmk:N:weights} with $N =2$, fix $\eps \in (0, 1/2)$ and given a cube $Q$, let $H_\eps^1$ and $H_\eps^2$ denote the subset $H_\eps \subset Q$ from Theorem~\ref{thm:Ainfty:H} \eqref{H:every:eps}, corresponding to $u$ and $v^{-p'/p}$, respectively. Set $H:= H_\eps^1 \cap H_\eps^2$, so that \eqref{meas:H:Q} yields $|H| \geq (1- 2\eps) |Q| >0$, and then \eqref{w:like:ave:Q} applied to  $u$ and $v^{-p'/p}$ gives
\begin{equation}\label{u:r:H}
\left(\fint_Q u  \right)^{1/q} \simeq u(x)^{1/q}, \quad \text{a.e. } x\in H
\end{equation}
as well as
\begin{equation}\label{vrp:r:H}
\left(\fint_Q v^{-p'/p}  \right)^{1/p'} \simeq v(x)^{-1/p}, \quad \text{a.e. } x\in H.
\end{equation}
Then, by fixing $x \in H$, \eqref{u:r:H} and \eqref{vrp:r:H} along with the hypothesis $u^{1/q} \leq C v^{1/p}$ a.e. $\rn$ yield
$$
 \left(\fint_Q u  \right)^{1/q}\left(\fint_Q  v^{-p'/p}\right)^{1/p'} \simeq  u(x)^{1/q} v(x)^{-1/p}  \lesssim C,
$$
where the implied constants depend only on $[u]_{\Ainf}$, $[v^{-p'/p}]_{\Ainf}$, $\eps$, $p$, and $n$. \qed

\subsection{Proof of Theorem~\ref{thm:SW:Ainf:p:q}} In keeping with the notation from the proof of Theorem~\ref{thm:SW:Ainf:p:q:alpha0=0},  from \eqref{u:r:H}, \eqref{vrp:r:H}, and the fact that $|H| \geq (1- 2\eps) |Q| >0$, given $1 < p \leq q < \infty$ we have 
\begin{equation*}
\left(\fint_Q u  \right)^{1/q}\left(\fint_Q  v^{-p'/p}\right)^{1/p'} \simeq \frac{u(x)^{1/q}}{v(x)^{1/p}}.
\end{equation*}
Now, since \eqref{u:r:H} and \eqref{vrp:r:H} imply $u(x) \simeq u(y)$ as well as $v(x) \simeq v(y)$ for a.e. $x, y \in H$, it follows that
$$
\frac{u(x)^{1/q}}{v(x)^{1/p}} \simeq \fint_H \frac{u^{1/q}}{v^{1/p}},  \quad \text{a.e. } x \in H.
$$
Consequently, 
\begin{equation}\label{u/v:p:q}
\left(\fint_Q u  \right)^{1/q}\left(\fint_Q  v^{-p'/p}\right)^{1/p'} \simeq \fint_H \frac{u^{1/q}}{v^{1/p}} \lesssim \fint_Q \frac{u^{1/q}}{v^{1/p}}
\end{equation}
which, from the definition of $\Theta(\alpha)$ in \eqref{def:alpha0}, yields 
\begin{equation}\label{SW:p:q:Ma0}
|Q|^{\alpha/n + 1/q - 1/p} \left(\fint_Q u  \right)^{1/q}\left(\fint_Q  v^{-p'/p}\right)^{1/p'} \lesssim |Q|^{\Theta(\alpha)/n} \fint_Q \frac{u^{1/q}}{v^{1/p}}.
\end{equation}
Finally, the hypothesis $u^{1/q}/v^{1/p} \in L^{\frac{n}{\Theta(\alpha)}, \infty}(\rn)$ and Theorem~\ref{thm:Ma:Linfty} imply 
$$
|Q|^{\Theta(\alpha)/n} \fint_Q \frac{u^{1/q}}{v^{1/p}} \leq \norm{\mathcal{M}_{\Theta(\alpha)}(u^{1/q}/v^{1/p})}{L^\infty(\rn)} \leq C \norm{u^{1/q}/v^{1/p}}{L^{n/\Theta(\alpha), \infty}(\rn)}
$$
and \eqref{SW:p:q} follows. If, in addition, $u^{1/q}/v^{1/p} \in \Ainf$, then fix $\eps \in (0, 1/3)$ and given a cube $Q$, let $H_\eps^1$, $H_\eps^2$, and $H_\eps^3$ denote the subset $H_\eps \subset Q$ from Theorem~\ref{thm:Ainfty:H} \eqref{H:every:eps}, corresponding to $u$, $v^{-p'/p}$, and $u^{1/q}/v^{1/p}$,  respectively, and put $H:= H_\eps^1 \cap H_\eps^2 \cap H_\eps^3$, so that \eqref{meas:H:Q} yields $|H| \geq (1- 3\eps) |Q| >0$. Then, the inequality $\fint_H \frac{u^{1/q}}{v^{1/p}} \lesssim \fint_Q \frac{u^{1/q}}{v^{1/p}}$ from \eqref{u/v:p:q} becomes the equivalence  $\fint_H \frac{u^{1/q}}{v^{1/p}} \simeq \fint_Q \frac{u^{1/q}}{v^{1/p}}$, which then turns \eqref{SW:p:q:Ma0} into 
$$
|Q|^{\alpha/n + 1/q - 1/p} \left(\fint_Q u  \right)^{1/q}\left(\fint_Q  v^{-p'/p}\right)^{1/p'} \simeq |Q|^{\Theta(\alpha)/n} \fint_Q \frac{u^{1/q}}{v^{1/p}}
$$
and therefore \eqref{SW:p:q} is equivalent to $\mathcal{M}_{\Theta(\alpha)}(u^{1/q}/v^{1/p}) \in L^\infty(\rn)$. \qed

\section{Examples of $(u,v)$ with $u= \mathcal{M}(g)^{\Theta(\alpha)q/n} v^{q/p}$ and $g \in L^1(\rn)$}\label{secc:examples:u:Mg} 

For $1 < p < \infty$ and $0 < \alpha <n$, in \cite[Section~3]{Perez1} C. P\'erez introduced the class $A_{p,\alpha}$ of pairs $(u,v)$ satisfying
\begin{equation*}
[(u,v)]_{A_{p,\alpha}}:= \sup\limits_Q |Q|^{\alpha /n}\left(\fint_Q u  \right)^{1/p} \left(\fint_Q  v^{-p'/p}\right)^{1/p'} < \infty
\end{equation*}
as well as the classes
\begin{align*}
B_{p,\alpha}:=&\{(u,v) \in A_{p,\alpha}: v^{-p'/p} \in \Ainf \},\\ 
C_{p,\alpha}:=&\{(u,v) \in A_{p,\alpha}: u \in \Ainf \},\\
D_\alpha:= &\left\{u \in \Ainf: \sup\limits_Q |Q|^{\alpha/n}\fint_Q u < \infty \right\}.
\end{align*}
Notice that weight pairs in the intersection $B_{p,\alpha} \cap C_{p,\alpha}$ satisfy \eqref{SW:p:q} (with $p=q$). As proved in \cite[Section~3]{Perez1}, if $0 < \alpha < n$, $1 < p< \infty$, and $v^{-p'/p} \in \Ainf$,  then $(\mathcal{M}_{\alpha p'}(v^{-p'/p}), v) \in B_{p,\alpha} \cap C_{p,\alpha}$ as long as $\mathcal{M}_{\alpha p'}(v^{-p'/p})$ is finite a.e. Similarly, if $\alpha p < n$, then $(u, \mathcal{M}_{\alpha p}(u)) \in A_{p,\alpha}$, for a general weight $u$ with $\mathcal{M}_{\alpha p}(u)$ finite a.e. Notice however that if $w \equiv 1$, then $\mathcal{M}_\alpha(w) \equiv \infty$ for $0 < \alpha < n$. In this section we will construct examples of weight pairs $(u,v) \in B_{p,\alpha} \cap C_{p,\alpha}$ based on the Hardy-Littlewood maximal operator $\mathcal{M}$.

Given $1 < p \leq q < \infty$,  $u, v^{-p'/p} \in \Ainf$, and $0 < \alpha < n$ with $\Theta(\alpha) > 0$, by Theorem~\ref{thm:SW:Ainf:p:q} the condition $u^{1/q}/v^{1/p} \in L^{{n/\Theta(\alpha)}, \infty}(\rn)$ implies \eqref{SW:p:q}. In this section we seek a pair $(u,v)$ such that
\begin{equation}\label{def:u:Mg:v:p:q}
u:= \mathcal{M}(g)^{\Theta(\alpha)q/n} v^{q/p},
\end{equation}
which immediately yields $u^{1/q}/v^{1/p} = \mathcal{M}(g)^{\Theta(\alpha)/n} \in L^{n/\Theta(\alpha), \infty}(\rn)$ for any $g \in L^1(\rn)$ with the estimate
\begin{equation}\label{uq:vp:alpha0:infty:M}
\norm{u^{1/q}/v^{1/p}}{L^{n/\Theta(\alpha), \infty}(\rn)} = \norm{\mathcal{M}(g)^{ \Theta(\alpha)/n} }{L^{n/\Theta(\alpha), \infty}(\rn)} \lesssim \norm{g}{L^1(\rn)}^{\Theta(\alpha)/n}.
\end{equation}

In what follows we are then only to find sufficient conditions for $u, v^{-p'/p} \in \Ainf$.

\begin{thm}\label{thm:Mg:p<q:u:A1} Fix $1 < p\leq q < \infty $ and $0 < \alpha <n$ with $\Theta(\alpha) :=\alpha + n (1/q-1/p) > 0$ and $p' \Theta(\alpha) < n$. Then, given $u \in A_1(\rn)$ and $g \in L^1(\rn)$ the pair 
$$
(u,v) = (u,  \mathcal{M}(g)^{-\Theta(\alpha) p/n} u^{p/q})
$$ 
satisfies \eqref{SW:p:q} with the estimate 
\begin{equation}\label{uv:g:alpha0/n}
[(u,v)]_{A_{p,q}^{\alpha}} \lesssim \norm{g}{L^1(\rn)}^{\Theta(\alpha)/n},
\end{equation}
where the implied constants depend only on $n$, $\alpha$, $p$, $q$, and $[u]_{A_1(\rn)}$. In particular, from \eqref{p:q:I:alpha} we obtain the weighted inequality
$$
\left(\int_{\rn} |I_\alpha(f)(x)|^q  u(x) \dx \right)^\frac{1}{q}\leq C \norm{g}{L^1(\rn)}^{\Theta(\alpha)/n} \left(\int_{\rn} |f(x)|^p \mathcal{M}(g)(x)^{-\Theta(\alpha)p/n} u(x)^{p/q} \dx \right)^\frac{1}{p},
$$
for every $g \in L^1(\rn)$ and measurable $f$, where $C > 0$ depends only on $n$, $\alpha$, $p$, $q$, and $[u]_{A_1(\rn)}$.
\end{thm}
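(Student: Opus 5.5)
The plan is to apply the first part of Theorem~\ref{thm:SW:Ainf:p:q}. With $v = \mathcal{M}(g)^{-\Theta(\alpha)p/n}u^{p/q}$ we have $v^{1/p} = \mathcal{M}(g)^{-\Theta(\alpha)/n}u^{1/q}$. Hence $u^{1/q}/v^{1/p} = \mathcal{M}(g)^{\Theta(\alpha)/n}$, and \eqref{uq:vp:alpha0:infty:M} gives its $L^{n/\Theta(\alpha),\infty}$ norm as $\lesssim \norm{g}{L^1(\rn)}^{\Theta(\alpha)/n}$. This rests on the weak-type $(1,1)$ bound for $\mathcal{M}$: $\norm{\mathcal{M}(g)^{\Theta(\alpha)/n}}{L^{n/\Theta(\alpha),\infty}} = \norm{\mathcal{M}g}{L^{1,\infty}}^{\Theta(\alpha)/n}$. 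We may assume $g\not\equiv 0$, so that $0<\mathcal{M}(g)<\infty$ a.e.; otherwise the right-hand side of \eqref{uv:g:alpha0/n} vanishes and the statement is degenerate. Once $u$ and $v^{-p'/p}$ are shown to lie in $\Ainf$ with constants controlled by $n,\alpha,p,q,[u]_{A_1}$, estimate \eqref{uv:pq:thm:SW} yields \eqref{uv:g:alpha0/n}, and \eqref{p:q:I:alpha} gives the weighted inequality. Since $u\in A_1\subset\Ainf$ is given, the only real task is $v^{-p'/p}\in\Ainf$.

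Compute
$$
v^{-p'/p} = \mathcal{M}(g)^{\Theta(\alpha)p'/n}\, u^{-p'/q}.
$$
The hypothesis $p'\Theta(\alpha)<n$ means $\theta:=\Theta(\alpha)p'/n\in(0,1)$, so by \eqref{Mg:theta:A1} the weight $w_1:=\mathcal{M}(g)^{\theta}$ belongs to $A_1(\rn)$ with $[w_1]_{A_1}\le C_n/(1-\theta)$, a bound independent of $g$. It remains to deal with the factor $u^{-p'/q}$. I would use the factorization \eqref{w1w2tau:A1}: with $w_2:=u\in A_1$, any $\tau>1$ with $\tau-1=p'/q$ gives $w_1w_2^{-(\tau-1)}\in A_\tau(\rn)\subset\Ainf$. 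So take $\tau = 1+p'/q$; then $v^{-p'/p} = w_1 u^{-(\tau-1)}\in A_\tau$. Its constant is bounded by a power of $[w_1]_{A_1}[u]_{A_1}$, which is the standard quantitative form of the Jones-type factorization and follows directly from Hölder's inequality. The resulting $\Ainf$ constant therefore depends only on $n,\alpha,p,q,[u]_{A_1}$.

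I expect the main obstacle to be bookkeeping rather than substance. One must check that $v$ is a genuine weight, meaning $v^{-p'/p}$ is locally integrable and $v>0$ a.e. Local integrability follows from membership in $A_\tau$, while positivity uses $u>0$ a.e. (true for $A_1$ weights away from the trivial case) and $\mathcal{M}g<\infty$ a.e. One must also track that every constant produced by Theorem~\ref{thm:SW:Ainf:p:q}, specifically $C_2$, depends on $[u]_{\Ainf}$ and $[v^{-p'/p}]_{\Ainf}$ only through the bounds above, and hence is uniform in $g$. With these checks in place the proof is a direct combination of \eqref{Mg:theta:A1}, \eqref{w1w2tau:A1}, \eqref{uq:vp:alpha0:infty:M}, and Theorem~\ref{thm:SW:Ainf:p:q}.
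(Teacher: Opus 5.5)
Your proposal is correct and follows the paper's proof essentially step for step. You write $v^{-p'/p}=\mathcal{M}(g)^{\Theta(\alpha)p'/n}u^{-(\tau-1)}$ with $\tau=1+p'/q$, get $v^{-p'/p}\in A_\tau(\rn)$ from \eqref{Mg:theta:A1} and \eqref{w1w2tau:A1}, and then invoke Theorem~\ref{thm:SW:Ainf:p:q} together with \eqref{uq:vp:alpha0:infty:M}, exactly as the paper does. One small correction that does not affect the argument: the bound $[w_1u^{1-\tau}]_{A_\tau}\le [w_1]_{A_1}[u]_{A_1}^{\tau-1}$ comes directly from the pointwise $A_1$ conditions, not from H\"older's inequality.
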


\begin{proof} Let us write \eqref{def:u:Mg:v:p:q} as
$$
v^{-p'/p}= \mathcal{M}(g)^{\Theta(\alpha) p'/n} u^{-p'/q} =  \mathcal{M}(g)^{\Theta(\alpha) p'/n} u^{- (\tau -1)}
$$
with $\tau:= p'/q + 1$. Since we have $\mathcal{M}(g)^{\Theta(\alpha) p'/n} \in A_1(\rn)$, due to \eqref{Mg:theta:A1} with $\theta := \Theta(\alpha) p'/n \in (0,1)$, as well as $u \in  A_1(\rn)$ by hypothesis, it follows from \eqref{w1w2tau:A1} that $v^{-p'/p} \in A_{\tau}(\rn)$ . Thus,  by Theorem~\ref{thm:SW:Ainf:p:q}, the pair $(u,v)$ satisfies \eqref{SW:p:q} also with the estimate \eqref{uv:g:alpha0/n} due to \eqref{uv:pq:thm:SW} and  \eqref{uq:vp:alpha0:infty:M}.  \end{proof}

\begin{thm}\label{thm:Mg:p<q} Fix $0 < \alpha <n$ and  $1 < p  \leq q < n/\Theta(\alpha)$ with $\Theta(\alpha) :=\alpha + n (1/q-1/p) > 0$. Fix $v \in RH_\infty(\rn)$ such that $v^{-p'/p} \in \Ainf$ and $g \in L^1(\rn)$. Then the pair 
$$
(u,v)=(\mathcal{M}(g)^{\Theta(\alpha)q/n} v^{q/p}, v)
$$ 
satisfies \eqref{SW:p:q} with the estimate 
\begin{equation}\label{uv:prop:Mg:p<q}
[(u,v)]_{A_{p, q}^{\alpha}} \leq C_{\alpha, \delta, p, q, n} \norm{g}{L^1(\rn)}^{\Theta(\alpha)/n}.
\end{equation}
In particular, from \eqref{p:q:I:alpha} we obtain the weighted inequality
$$
\left(\int_{\rn} |I_\alpha(f)(x)|^q  \mathcal{M}(g)(x)^{\Theta(\alpha)q/n} v(x)^{q/p} \dx \right)^\frac{1}{q}\leq C \norm{g}{L^1(\rn)}^{\Theta(\alpha)/n} \left(\int_{\rn} |f(x)|^p v(x) \dx \right)^\frac{1}{p},
$$
for every $g \in L^1(\rn)$ and measurable $f$, where $C > 0$ depends only on $n$, $\alpha$, $p$, $q$, $[v^{-p/p}]_{\Ainf}$, and $[v]_{RH_\infty(\rn)}$.
\end{thm}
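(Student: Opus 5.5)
The plan is to apply Theorem~\ref{thm:SW:Ainf:p:q}. For the pair in the statement we have $u^{1/q}/v^{1/p} = \mathcal{M}(g)^{\Theta(\alpha)/n}$, and \eqref{uq:vp:alpha0:infty:M} already controls its $L^{n/\Theta(\alpha),\infty}$ norm by $\norm{g}{L^1(\rn)}^{\Theta(\alpha)/n}$. Combined with \eqref{uv:pq:thm:SW}, this gives \eqref{uv:prop:Mg:p<q} at once. The weighted inequality then follows from \eqref{p:q:I:alpha}. Since $v^{-p'/p} \in \Ainf$ is a hypothesis, the only thing left to prove is
$$
u = \mathcal{M}(g)^{\Theta(\alpha)q/n} v^{q/p} \in \Ainf,
$$
with $[u]_{\Ainf}$ controlled by $n,\alpha,p,q$ and $[v]_{RH_\infty(\rn)}$.

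To prove this I will use the multiplier property \eqref{multipliers:Ainfty}. First, $w:=\mathcal{M}(g)^{\Theta(\alpha)q/n}$ lies in $A_1(\rn)\subset\Ainf$ by \eqref{Mg:theta:A1}. This needs $\theta:=\Theta(\alpha)q/n\in(0,1)$, which is exactly the hypothesis $q<n/\Theta(\alpha)$. We may assume $g\not\equiv 0$; then $\mathcal{M}(g)<\infty$ a.e. because $g\in L^1(\rn)$, and \eqref{Mg:theta:A1} gives $[w]_{A_1}\le C_n/(1-\theta)$. Second, $v\in RH_\infty(\rn)$, so \eqref{RHinfty:powers} with $\ell=q/p>0$ gives $v^{q/p}\in RH_\infty(\rn)\subset\bigcap_{s>1}RH_s(\rn)$, with constant depending only on $[v]_{RH_\infty}$, $q/p$ and $n$. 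Applying \eqref{multipliers:Ainfty} to $w$ and $v^{q/p}$ then yields $u=w\,v^{q/p}\in\Ainf$.

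The step that needs the most care is quantitative. The estimate in \eqref{multipliers:Ainfty} is $[wu]_{\Ainf}\le [w^\tau]_{\Ainf}^{1/\tau}[(v^{q/p})^{\tau'}]_{\Ainf}^{1/\tau'}$, with $\tau>1$ depending only on $[w]_{\Ainf}$ and $n$, so both factors must be bounded uniformly in $g$.
\begin{itemize}
\item For the first factor, I will choose $\tau>1$ with $\tau\theta<1$, for instance $\tau=(1+\theta)/(2\theta)$, or whatever smaller $\tau$ the multiplier theorem requires. Then $w^\tau=\mathcal{M}(g)^{\tau\theta}\in A_1(\rn)$, again by \eqref{Mg:theta:A1}, with a bound depending only on $n$ and $\theta$.
\item For the second factor, $(v^{q/p})^{\tau'}=v^{q\tau'/p}$ is in $RH_\infty(\rn)$ by \eqref{RHinfty:powers}. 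It is therefore in $\Ainf$ with a constant depending only on $[v]_{RH_\infty}$, $q\tau'/p$ and $n$.
\end{itemize}
Since $\tau$ depends only on $n,\alpha,p,q$, the constant $C_2$ from Theorem~\ref{thm:SW:Ainf:p:q} depends only on the admissible parameters, and the proof is complete.
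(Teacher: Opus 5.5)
Your proposal is correct and follows the paper's proof. As in the paper, you reduce to Theorem~\ref{thm:SW:Ainf:p:q} via \eqref{uq:vp:alpha0:infty:M}, then get $u\in\Ainf$ from three facts: $\mathcal{M}(g)^{\Theta(\alpha)q/n}\in A_1(\rn)$ (using $\Theta(\alpha)q<n$), $v^{q/p}\in RH_\infty(\rn)$ by \eqref{RHinfty:powers}, and the multiplier property \eqref{multipliers:Ainfty}. Your extra step of choosing $\tau>1$ with $\tau\theta<1$ keeps $[\mathcal{M}(g)^{\tau\theta}]_{A_1}$ and $[v^{q\tau'/p}]_{\Ainf}$ bounded independently of $g$, which makes explicit a uniformity that the paper leaves implicit. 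This is legitimate, since the H\"older bound in \eqref{multipliers:Ainfty} holds for every $\tau>1$ for which both factors are finite.
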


\begin{proof} First we show that $u :=\mathcal{M}(g)^{\Theta(\alpha)q/n} v^{q/p} \in \Ainf$. By assumption, $\Theta(\alpha)q < n$ and $g \in L^1(\rn)$, it then follows from \eqref{Mg:theta:A1} that $\mathcal{M}(g)(x)^{\Theta(\alpha)q/n} \in A_1(\rn)$. Next, the fact that $v \in RH_\infty(\rn)$ along with \eqref{RHinfty:powers} makes $v^{q/p} \in RH_\infty(\rn)$. In particular, $v^{q/p}$ is a multiplier of $\Ainf$ (recall \eqref{multipliers:Ainfty}), consequently $u \in \Ainf$ and all the hypotheses of Theorem~\eqref{thm:SW:Ainf:p:q} are met. Hence, $(u,v)$ satisfies \eqref{SW:p:q} with the estimate \eqref{uv:prop:Mg:p<q} due to \eqref{uv:pq:thm:SW} and  \eqref{uq:vp:alpha0:infty:M}. \end{proof}

For the case  $v(x):=|x|^\delta$ with $0 \leq \delta < n (p-1)$ Theorem~\ref{thm:Mg:p<q} yields

\begin{coro} Fix $0 < \alpha <n$ and  $1 < p  \leq q < n/\Theta(\alpha)$ with $\Theta(\alpha) :=\alpha + n (1/q-1/p) > 0$. Then, given $0 \leq \delta < n (p-1)$ we have
$$
\left(\int_{\rn} |I_\alpha(f)(x)|^q  \mathcal{M}(g)(x)^{\Theta(\alpha)q/n} |x|^{q\delta /p} \dx \right)^\frac{1}{q}\leq C \norm{g}{L^1(\rn)}^{\Theta(\alpha)/n} \left(\int_{\rn} |f(x)|^p |x|^\delta \dx \right)^\frac{1}{p},
$$
for every $g \in L^1(\rn)$ and measurable $f$, and $C > 0$ depends only on $n$, $\alpha$, $p$, $q$, and, $\delta.$
\end{coro}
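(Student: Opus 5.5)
The plan is to specialize Theorem~\ref{thm:Mg:p<q} to $v(x):=|x|^\delta$. The proof then reduces to checking its two hypotheses on $v$, namely $v\in RH_\infty(\rn)$ and $v^{-p'/p}\in\Ainf$, with constants depending only on $n$, $p$, and $\delta$. Once these are in place, the displayed inequality is exactly the conclusion of Theorem~\ref{thm:Mg:p<q}, since $v^{q/p}=|x|^{q\delta/p}$. The constant $C$ will depend on $[v]_{RH_\infty}$ and $[v^{-p'/p}]_{\Ainf}$, hence only on $n,\alpha,p,q,\delta$.

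For the first hypothesis, Remark~\ref{comments:A1:RHinfty} already shows that $|x|^\delta\in RH_\infty(\rn)$ for every $\delta\ge 0$. The value of $[|x|^\delta]_{RH_\infty}$ depends only on $\delta$ and $n$, because it is obtained from a fixed power $|x|^{-q_0}\in A_1$ together with \eqref{RHinfty:powers}. The case $\delta=0$ is trivial.

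For the second hypothesis, note that $v^{-p'/p}=|x|^{-\delta p'/p}=|x|^{-\delta/(p-1)}$. By the characterization of power weights recalled in Section~\ref{sec:Ap:weights}, this weight belongs to $A_1(\rn)\subset\Ainf$ exactly when $-n<-\delta/(p-1)\le 0$. The upper inequality holds because $\delta\ge0$, and the lower one is equivalent to $\delta<n(p-1)$, which is our hypothesis. So $v^{-p'/p}\in A_1(\rn)$, and its constant depends only on $n,p,\delta$.

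The remaining hypotheses of Theorem~\ref{thm:Mg:p<q}, namely $0<\alpha<n$, $1<p\le q<n/\Theta(\alpha)$, $\Theta(\alpha)>0$ and $g\in L^1(\rn)$, are assumed verbatim in the corollary. I do not expect a real obstacle here. The only point needing care is the bookkeeping that all constants depend only on $n,\alpha,p,q,\delta$, and this follows from the quantitative forms of the cited facts.
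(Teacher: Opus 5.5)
Your proposal is correct and follows the paper's proof: set $v(x)=|x|^\delta$, obtain $v\in RH_\infty(\mathbb{R}^n)$ from Remark~\ref{comments:A1:RHinfty} and $v^{-p'/p}=|x|^{-\delta/(p-1)}\in A_\infty$ from $\delta<n(p-1)$, then apply Theorem~\ref{thm:Mg:p<q}. Your further remark that this weight actually lies in $A_1$ (because $-n<-\delta/(p-1)\le 0$) just spells out the power-weight check that the paper leaves implicit.
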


\begin{proof} Set $v(x):=|x|^\delta$ (which belongs to $RH_\infty(\rn)$ due to Remark~\ref{comments:A1:RHinfty}) so that the condition $\delta < n (p-1)$ makes $|x|^{-\delta p'/p} \in \Ainf$.
\end{proof}
 
\section{Applications to weighted Hardy inequalities}\label{secc:app:Hardy} In \cite{CKN1},  L. Caffarelli, R. Kohn, and L. Nirenberg proved, among other results, that for $n >2$, $- \infty < a < \frac{n-2}{2}$, $a \leq b \leq a+1$, and $q = \frac{2n}{n-2+ 2(b-a)}$,  the weighted inequality 
\begin{equation}
\label{CKN:ineq:2:q}
\left(\int_{\rn}  |f(x)|^q |x|^{-bq} \, dx \right)^{1/q} \leq C \left(\int_{\rn} |\nabla f(x)|^2 |x|^{-2a}\, dx \right)^{1/2}, \quad \forall f \in C_c^1(\rn),
\end{equation}
for some $C=C(a,b,q,n) >0$. As an illustration of the applications of Theorems~\ref{thm:SW:Ainf:p:q:alpha0=0} and \ref{thm:SW:Ainf:p:q}, in this section we will establish new weighted Hardy inequalities, some of which will extend inequalities such as \eqref{CKN:ineq:2:q}, by utilizing the pairs $(u,v)$ developed in Section \ref{secc:examples:u:Mg}. Our starting point will be the pointwise estimate
\begin{equation}\label{f:I1:grad:f}
|f(x)| \leq C_n I_1(|\nabla f| )(x), \quad \forall x \in \rn,
\end{equation}
for every $f \in C_0^1(\rn)$ (i.e., $f$ continuously differentiable, vanishing at $\infty$), which is quickly proved, for given $f \in C_0^1(\rn)$, $x \in \rn$, and $\omega \in S^{n-1}$ the fundamental theorem of calculus gives
$$
f(x) = - \int_0^\infty \frac{d}{dr} f(x + r \omega) \, dr
$$ 
and then, after integration on both sides with respect to $\omega \in S^{n-1}$,
$$
f(x) =  - \frac{1}{n\omega_n} \int_0^\infty \int_{S^{n-1}} \frac{d}{dr} f(x + r \omega) \, dr d\omega  = \frac{1}{n\omega_n} \int_{\rn} \frac{(x- y) \cdot \nabla f(y)}{|x-y|^n} \, dy 
$$
and \eqref{f:I1:grad:f} follows. By combining \eqref{f:I1:grad:f} with  $\norm{I_1(h)}{L^q(u)} \leq C [(u,v)]_{A_{p,q}^{1}} \norm{h}{L^p(v)}$ from \eqref{p:q:I:alpha},  we obtain the Hardy-Sobolev inequality
\begin{equation}\label{HS:pq}
\norm{f}{L^q(u)} \leq C [(u,v)]_{A_{p,q}^{\alpha}} \norm{\nabla f}{L^p(v)}, \quad \forall f \in C_0^1(\rn) 
\end{equation}
whenever the pair $(u,v)$ satisfies the Sawyer-Wheeden condition \eqref{SW:p:q}. 

\begin{remark} Poincar\'e-type inequalities can be obtained as well by using that given a cube $Q \subset \rn$ the weighted inequality
\begin{equation}\label{Sob:u:v}
\left(\int_{Q} |f(x) - f_Q|^q u(x) \, dx \right)^{1/q} \leq C_S \left(\int_{Q} |\nabla f(x)|^p v(x) \, dx \right)^{1/p}
\end{equation}
holds true for every $f \in C^1(\rn)$ whenever the pair $(u,v)$ satisfies \eqref{SW:p:q} with $u, v^{-p'/p} \in \Ainf$. Moreover, in this case the constant $C_S > 0$ from \eqref{Sob:u:v} can be expressed as $C_S = C_{p,q,n} [(u,v)]_{A_{p,q}^{\alpha}}$, see for instance \cite[Theorem~5]{SW1}.
\end{remark}

\begin{remark} Although the applications below will focus on $I_1$, the method to obtain Hardy-type inequalities can be implemented for a fractional exponent $0 < \alpha < n$ by means of fractional derivatives $(-\Delta)^{\alpha/2}$ by replacing \eqref{f:I1:grad:f} with the identity
$$
f = I_\alpha((-\Delta)^{\alpha/2}(f))
$$
for every $f$ in the Schwartz class (see for instance \cite[p.117]{Stein}) which along with \eqref{p:q:I:alpha} yields
\begin{equation*}
\left(\int_{\rn} |f(x)|^q  u(x) \dx \right)^{1/q}\leq C  [(u,v)]_{A_{p,q}^{\alpha}}  \left(\int_{\rn} |(-\Delta)^{\alpha/2}f(x)|^p v(x) \dx \right)^{1/p}.
\end{equation*}
\end{remark}

\subsection{Hardy-type inequalities after Theorems~\ref{thm:SW:Ainf:p:q:alpha0=0} and \ref{thm:SW:Ainf:p:q}}\label{sec:gen:Hardy}

Before passing to specific choices of pairs $(u, v)$, let us state here the general forms of the two-weight Hardy-type inequalities that follow from Theorems~\ref{thm:SW:Ainf:p:q:alpha0=0} and \ref{thm:SW:Ainf:p:q}. Recall that, for $1 < p \leq q < \infty$, we have defined
$$
\Theta(1) := 1 + n \left(\frac{1}{q} - \frac{1}{p}\right).
$$
For the cases  $\Theta(1) =0$ and $\Theta(1) >0$ (corresponding to the cases $1/q = 1/p-1/n$ and $1/q > 1/p-1/n$, respectively) we have the following Theorems~\ref{thm:SW:Ainf:p:q:alpha0=0:app} and \ref{thm:SW:Ainf:p:q:alpha0>0:app}.

\begin{thm}\label{thm:SW:Ainf:p:q:alpha0=0:app} Fix $n > 1$ and $1 < p \leq q < \infty$ such that $\Theta(1)=0$. Given weights $u, v^{-p'/p} \in \Ainf$ such that $u(x)^{1/q} \leq C_1 v(x)^{1/p}$ for some $C_1 > 0$ and a.e. $x \in \rn$, we have 
\begin{equation*}
\left(\int_{\rn} |f(x)|^q  u(x) \dx \right)^{1/q}\leq C C_1 \left(\int_{\rn} |\nabla f(x)|^p v(x) \dx \right)^{1/p}, \quad \forall f \in C_0^1(\rn),
\end{equation*}
where $C > 0$ depends only on $n$, $p$, $q$, $[u]_{\Ainf}$, and $[v^{-p'/p}]_{\Ainf}$.
\end{thm}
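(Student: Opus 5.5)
The plan is to combine the pointwise representation \eqref{f:I1:grad:f} with Theorem~\ref{thm:SW:Ainf:p:q:alpha0=0} applied with $\alpha=1$. The hypothesis $\Theta(1)=0$ means $1/q=1/p-1/n$. This forces $p<n$, so that $0<\alpha=1<n$, which holds because $n>1$. Hence we are exactly in the setting of Theorem~\ref{thm:SW:Ainf:p:q:alpha0=0} with $\alpha=1$ and $\Theta(\alpha)=0$.

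\textbf{Step 1.} Since $u, v^{-p'/p}\in\Ainf$ and $u^{1/q}\le C_1 v^{1/p}$ a.e., Theorem~\ref{thm:SW:Ainf:p:q:alpha0=0} gives \eqref{SW:p:q} with $[(u,v)]_{A^{1}_{p,q}}\lesssim C_1$. The implied constant depends only on $n,p,q,[u]_{\Ainf},[v^{-p'/p}]_{\Ainf}$.

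\textbf{Step 2.} The same theorem also gives the strong-type bound
\[
\norm{I_1(h)}{L^q(u)}\le C_1 C' \norm{h}{L^p(v)}
\]
for every measurable $h$. We apply this with $h=|\nabla f|$.

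\textbf{Step 3.} For $f\in C^1_0(\rn)$ we have $|f|\le C_n I_1(|\nabla f|)$ pointwise by \eqref{f:I1:grad:f}. Raising to the power $q$, integrating against $u$, and using Step 2 yields
\[
\norm{f}{L^q(u)}\le C_n\norm{I_1(|\nabla f|)}{L^q(u)}\le C_nC'C_1\norm{\nabla f}{L^p(v)},
\]
which is the claim with $C=C_nC'$.

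The only point needing care is the justification of \eqref{f:I1:grad:f} for $f\in C_0^1$ that are not compactly supported. There the integral $\int_0^\infty \frac{d}{dr}f(x+r\omega)\,dr$ converges to $-f(x)$ because $f$ vanishes at infinity. If $I_1(|\nabla f|)(x)=\infty$ the pointwise bound is trivial. If the right-hand side $\norm{\nabla f}{L^p(v)}$ is infinite there is nothing to prove. Beyond this bookkeeping, and checking that the constants depend only on the listed quantities, I expect no real obstacle: the proof is a direct chaining of the earlier results.
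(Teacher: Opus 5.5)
Your proposal is correct and follows the paper's route: the paper also applies Theorem~\ref{thm:SW:Ainf:p:q:alpha0=0} with $\alpha=1$ to get the $I_1$ bound, and chains it with the pointwise estimate \eqref{f:I1:grad:f}, as in \eqref{HS:pq}. Your extra remarks are sound bookkeeping that the paper leaves implicit: that $\alpha=1<n$ is admissible, and how \eqref{f:I1:grad:f} is justified for $f\in C_0^1(\rn)$ without compact support.
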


\begin{proof} It follows from Theorem~\ref{thm:SW:Ainf:p:q:alpha0=0} with $\alpha =1$. 
\end{proof}

\begin{thm}\label{thm:SW:Ainf:p:q:alpha0>0:app} Fix $n > 1$ and $1 < p \leq q < \infty$ such that $\Theta(1) > 0$. Given weights $u, v^{-p'/p} \in \Ainf$ such that  $u^{1/q}/v^{1/p} \in L^{n/\Theta(1), \infty}(\rn)$, we have, for every $f \in C_0^1(\rn)$, 
$$
\left(\int_{\rn} |f(x)|^q  u(x) \dx \right)^{1/q} \leq C_2 \norm{u^{1/q}/v^{1/p}}{L^{n/\Theta(\alpha), \infty}(\rn)} \left(\int_{\rn} |f(x)|^p v(x) \dx \right)^{1/p},
$$
where $C_2 > 0$ depends only on $n$, $\alpha$, $p$, $q$, $[u]_{\Ainf}$, and $[v^{-p'/p}]_{\Ainf}$.
\end{thm}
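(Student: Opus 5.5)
The plan is to run the argument of Theorem~\ref{thm:SW:Ainf:p:q:alpha0=0:app} again, now in the regime $\Theta(1)>0$, taking Theorem~\ref{thm:SW:Ainf:p:q} with $\alpha=1$ as the weighted input. I read the right-hand side of the statement as $\bigl(\int_{\rn}|\nabla f|^p v\bigr)^{1/p}$ and $\Theta(\alpha)$ as $\Theta(1)$. This is the form produced by \eqref{HS:pq}; the printed $|f(x)|^p$ and $\Theta(\alpha)$ appear to be typographical slips.

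First I will check that Theorem~\ref{thm:SW:Ainf:p:q} applies. Since $n>1$, the value $\alpha=1$ lies in $(0,n)$. We are given $1<p\le q<\infty$ and $\Theta(1)>0$. By hypothesis $u, v^{-p'/p}\in\Ainf$ and $u^{1/q}/v^{1/p}\in L^{n/\Theta(1),\infty}(\rn)$. Hence \eqref{uv:pq:thm:SW} holds with $\alpha=1$:
$$
[(u,v)]_{A^{1}_{p,q}}\le C_2\norm{u^{1/q}/v^{1/p}}{L^{n/\Theta(1),\infty}(\rn)},
$$
where $C_2$ depends only on $n,p,q,[u]_{\Ainf},[v^{-p'/p}]_{\Ainf}$. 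Via the Sawyer--Wheeden estimate \eqref{p:q:I:alpha}, this gives $\norm{I_1(h)}{L^q(u)}\le C\,[(u,v)]_{A^{1}_{p,q}}\norm{h}{L^p(v)}$ for all measurable $h$.

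Next, for $f\in C_0^1(\rn)$ I will use the pointwise bound \eqref{f:I1:grad:f}, namely $|f|\le C_n I_1(|\nabla f|)$ everywhere. Raising it to the power $q$, integrating against $u$, and applying the previous estimate with $h=|\nabla f|$ gives
$$
\norm{f}{L^q(u)}\le C_n\norm{I_1(|\nabla f|)}{L^q(u)}\le C\,C_2\norm{u^{1/q}/v^{1/p}}{L^{n/\Theta(1),\infty}(\rn)}\norm{\nabla f}{L^p(v)}.
$$
This is \eqref{HS:pq} with the constant made explicit. After renaming constants it is the claimed inequality, with the stated dependence of the constant.

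There is no real obstacle here, since every ingredient is already in place. The only point needing care is to confirm that the hypotheses match. The $A_\infty$ assumptions let Theorem~\ref{thm:SW:Ainf:p:q} replace the $r$-bumped condition \eqref{SW:p:q:r} by \eqref{SW:p:q}. The requirement $n>1$ ensures that $\alpha=1<n$. If $\norm{\nabla f}{L^p(v)}=\infty$ the inequality is trivial, so no extra integrability of $f$ is needed.
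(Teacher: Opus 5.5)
Your proposal is correct and is the paper's argument: the paper's proof simply invokes Theorem~\ref{thm:SW:Ainf:p:q} with $\alpha=1$ and combines it with the pointwise bound \eqref{f:I1:grad:f}, which is \eqref{HS:pq}. Your reading of the right-hand side as $\norm{\nabla f}{L^p(v)}$ and of $\Theta(\alpha)$ as $\Theta(1)$ is also correct, since the printed $|f|^p$ is a typo.
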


\begin{proof} It follows from Theorem~\ref{thm:SW:Ainf:p:q} with $\alpha =1$. 
\end{proof}

\subsection{Hardy-type inequalities with $u= \mathcal{M}(g)^{\Theta(\alpha)q/n} v^{q/p}$ and $g \in L^1(\rn)$}

In this section we express the inequality \eqref{HS:pq} in terms of the weights developed in Section~\ref{secc:examples:u:Mg} and $\alpha =1$. 

\begin{thm}\label{thm:Mg:p<q:u:A1:app} Fix $n > 1$, $1 < p\leq q < \infty $ with $\Theta(1) :=1 + n (1/q-1/p) > 0$ and $\Theta(1) p' < n$. Then, given $u \in A_1(\rn)$ there exists $C > 0$ depending only on $n$, $p$, $q$, and $[u]_{A_1(\rn)}$ such that
$$
\left(\int_{\rn} |f(x)|^q  u(x) \dx \right)^{1/q}\leq C \norm{g}{L^1(\rn)}^{\Theta(1)/n} \left(\int_{\rn} |\nabla f(x)|^p \mathcal{M}(g)(x)^{- \Theta(1)p/n} u(x)^{p/q} \dx \right)^{1/p} 
$$
for every $f \in C_0^1(\rn)$ and $g \in L^1(\rn)$.
\end{thm}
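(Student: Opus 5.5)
This is a direct specialization of Theorem~\ref{thm:Mg:p<q:u:A1} to $\alpha=1$, combined with the pointwise representation \eqref{f:I1:grad:f}. Throughout, fix $n>1$, $1<p\le q<\infty$ with $\Theta(1)>0$ and $\Theta(1)p'<n$, a weight $u\in A_1(\rn)$, and $g\in L^1(\rn)$. Define
$$
v:=\mathcal{M}(g)^{-\Theta(1)p/n}u^{p/q}.
$$

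First, check that the hypotheses of Theorem~\ref{thm:Mg:p<q:u:A1} hold with $\alpha=1$. Since $n>1$, we have $0<1<n$, and the conditions $\Theta(1)>0$ and $p'\Theta(1)<n$ are exactly the assumed ones. Theorem~\ref{thm:Mg:p<q:u:A1} then gives that $(u,v)$ satisfies \eqref{SW:p:q} with
$$
[(u,v)]_{A_{p,q}^{1}}\lesssim \norm{g}{L^1(\rn)}^{\Theta(1)/n},
$$
where the implied constants depend only on $n,p,q,[u]_{A_1}$. Its proof also shows that $v^{-p'/p}\in A_\tau(\rn)\subset\Ainf$ and $u\in A_1(\rn)\subset\Ainf$. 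Hence \eqref{p:q:I:alpha} is available for $I_1$ with a constant $C_{n,p,q,1}[(u,v)]_{A_{p,q}^{r,1}}\lesssim [(u,v)]_{A_{p,q}^{1}}$. The $A_\infty$ constants of $u$ and $v^{-p'/p}$ that enter here are controlled by $[u]_{A_1}$, $n$, $p$, $q$, because \eqref{Mg:theta:A1} bounds the $A_1$ constant of $\mathcal{M}(g)^{\Theta(1)p'/n}$ uniformly in $g$.

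Next, take $f\in C_0^1(\rn)$. Using \eqref{f:I1:grad:f}, $|f|\le C_n I_1(|\nabla f|)$ pointwise, so
$$
\norm{f}{L^q(u)}\le C_n\norm{I_1(|\nabla f|)}{L^q(u)}\le C\,[(u,v)]_{A_{p,q}^1}\norm{\nabla f}{L^p(v)}.
$$
Inserting the bound on $[(u,v)]_{A_{p,q}^1}$ and the explicit form of $v$ gives the stated inequality. This is just \eqref{HS:pq} for the pair at hand.

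The only point needing care is that the constant is independent of $g$. This is not a real obstacle. It follows from the dimensional bound $C_n/(1-\theta)$ in \eqref{Mg:theta:A1} with $\theta=\Theta(1)p'/n$, together with the quantitative version of \eqref{w1w2tau:A1}. Both make the $A_\tau$ (hence $A_\infty$) constant of $v^{-p'/p}$ depend only on $n,p,q,[u]_{A_1}$, so all constants in Theorem~\ref{thm:SW:Ainf:p:q} are uniform. One should also note that $\mathcal{M}(g)<\infty$ a.e. because $g\in L^1$, so $v$ is a genuine weight. If $g=0$, the inequality is trivial or vacuous by convention.
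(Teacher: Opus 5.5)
Your proposal is correct and follows the paper's own route: specialize Theorem~\ref{thm:Mg:p<q:u:A1} to $\alpha=1$ and combine it with the pointwise bound \eqref{f:I1:grad:f}, which is exactly the Hardy--Sobolev estimate \eqref{HS:pq}. Your remarks that the constant does not depend on $g$, via \eqref{Mg:theta:A1} and the quantitative form of \eqref{w1w2tau:A1}, spell out what the paper leaves implicit.
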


\begin{proof} It follows from Theorem~\ref{thm:Mg:p<q:u:A1} with $\alpha =1$. 
\end{proof}

\begin{coro}\label{coro:Mg:p<q:u:A1:app} Fix $n > 1$, $1 < p\leq q < \infty $ with $\Theta(1) :=1 + n (1/q-1/p) > 0$ and $p' \Theta(1) < n$. Then, given $u \in A_1(\rn)$ there exists $C > 0$ depending only on $n$, $p$, $q$, and $[u]_{A_1(\rn)}$ such that
$$
\left(\int_{\rn} |f(x)|^q  u(x) \dx \right)^{1/q}\leq C  \left(\int_{\rn} |\nabla f(x)|^p |x|^{\Theta(1)p} u(x)^{p/q} \dx \right)^{1/p},  \quad \forall f \in C_0^1(\rn). 
$$
\end{coro}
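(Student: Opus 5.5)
The corollary should follow from Theorem~\ref{thm:Mg:p<q:u:A1:app} by a single choice of $g$. The aim is to choose $g \in L^1(\rn)$ with
\[
\mathcal{M}(g)(x)^{-\Theta(1)p/n} \simeq |x|^{\Theta(1)p}, \qquad \text{i.e.} \qquad \mathcal{M}(g)(x) \simeq |x|^{-n},
\]
at least for $|x|$ away from the origin. The natural candidate is $g = \chi_{B(0,1)}$, with $\norm{g}{L^1(\rn)} = |B(0,1)|$ a dimensional constant.

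For $|x|\ge 1$ the standard estimate gives $\mathcal{M}(\chi_{B(0,1)})(x) \simeq |x|^{-n}$. For $|x|<1$ we have $\mathcal{M}(\chi_{B(0,1)})(x) \simeq 1$. So in general
\[
\mathcal{M}(\chi_{B(0,1)})(x) \simeq \min\{1,|x|^{-n}\},
\]
and hence
\[
\mathcal{M}(g)(x)^{-\Theta(1)p/n} \simeq \max\{1,|x|^{\Theta(1)p}\}.
\]
This differs from $|x|^{\Theta(1)p}$ near the origin, so it does not directly give the stated weight.

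To remove the unit scale I would use dilation. For $R>0$ put $g_R = \chi_{B(0,R)}$. Then
\[
\norm{g_R}{L^1(\rn)}^{\Theta(1)/n} \simeq R^{\Theta(1)}, \qquad \mathcal{M}(g_R)(x)^{-\Theta(1)p/n} \simeq \max\{1, |x|^{\Theta(1)p} R^{-\Theta(1)p}\}.
\]
Substituting into Theorem~\ref{thm:Mg:p<q:u:A1:app}, the $R$-powers combine on the right-hand side into
\[
C\left(\int_{\rn} |\nabla f|^p \max\{R^{\Theta(1)p}, |x|^{\Theta(1)p}\}\, u^{p/q} \dx\right)^{1/p}.
\]
Now I let $R \to 0$. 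Since $\max\{R^{\Theta(1)p},|x|^{\Theta(1)p}\}$ decreases to $|x|^{\Theta(1)p}$ and is bounded by $\max\{1,|x|^{\Theta(1)p}\}$ for $R\le 1$, monotone or dominated convergence gives the claim. The constant $C$ does not depend on $g$, so it survives the limit.

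The limit passage is legitimate when $\int |\nabla f|^p \max\{1,|x|^{\Theta(1)p}\} u^{p/q} < \infty$. This holds for $f\in C_c^1$, since $u^{p/q}\in L^1_{\rm loc}$ because $p/q\le 1$. For general $f\in C_0^1(\rn)$, the right-hand side of the corollary is then either infinite, or else it is finite and the limit still applies by monotone convergence: each approximating integral is finite once the $R=1$ integral is finite, and the $R=1$ integral is finite because it is controlled by the $|x|^{\Theta(1)p}$ integral on $|x|\ge 1$ and by local integrability on the unit ball.

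The main obstacle is the two-sided pointwise estimate for $\mathcal{M}(\chi_{B(0,R)})$. Only the lower bound $\mathcal{M}(\chi_{B(0,R)})(x) \gtrsim \min\{1, R^n|x|^{-n}\}$ is actually needed, because the weight enters with a negative power. I would prove it by testing the supremum on a cube centered at $x$ of side about $2(|x|+R)$, which contains $B(0,R)$.
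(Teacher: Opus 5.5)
Your proof is correct and is essentially the paper's argument. The paper applies Theorem~\ref{thm:Mg:p<q:u:A1:app} to $g_k = k^n\chi_{B(0,1/k)}$, which is your $g_R$ with $R=1/k$ up to a scalar factor (harmless, since $\|g\|_{L^1}^{\Theta(1)/n}\mathcal{M}(g)^{-\Theta(1)/n}$ is unchanged under $g\mapsto cg$), and then lets $k\to\infty$. Your explicit treatment of the region $|x|\le R$ through the weight $\max\{R^{\Theta(1)p},|x|^{\Theta(1)p}\}$, together with the monotone convergence justification, fills in details the paper leaves implicit.
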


\begin{proof}  For $k \in \na$ apply Theorem~\ref{thm:Mg:p<q:u:A1:app} to  $g_k := k^n \chi_{B(0, 1/k)}$, which then gives $ \norm{g_k}{L^1(\rn)} = C_n$ as well as $\mathcal{M}(g_k)(x) \geq |x|^{-n}$  (and hence $\mathcal{M}(g_k)(x)^{- \Theta(1)p/n} \leq |x|^{\Theta(1)p}$) for every $x \in \rn$ with $|x| > 1/k$. The corollary then follows by letting $k \to \infty$.
\end{proof}

\begin{remark} When $n >2$ one recovers the Caffarelli-Kohn-Nirenberg inequality \eqref{CKN:ineq:2:q} from Corollary \ref{coro:Mg:p<q:u:A1:app} by choosing $p=2$ and, given $q$ and $a,b$, with $a < b$, as in \eqref{CKN:ineq:2:q}, by setting $u(x):=|x|^{-bq}$. Then, $u \in A_1(\rn)$ if and only if $bq < n$, which means
$$
bq = \frac{2nb}{n-2 + 2 (b-a)} < n
$$
and it amounts to $a < (n-2)/2$. Also, from the definition of $q = \frac{2n}{n-2+ 2(b-a)}$ we get $\Theta(1) = b-a$ and then $|x|^{p \Theta(1)} u(x)^{p/q} =|x|^{2(\Theta(1) - b)}= |x|^{-2a}$. In particular, $(b-a) = \Theta(1) > 0$ and $\Theta(1) \leq 1$ (because $q \geq p=2)$.  Finally, $ \Theta(1)p' < n$ means $2 \Theta(1) < n$ which holds true because $0 < \Theta(1) \leq 1$ and $n >2$.
\end{remark}

\begin{thm}\label{thm:Mg:p<q:app} Fix $n > 1$ and $1 < p  \leq q < n/\Theta(1)$ with $\Theta(1) :=1 + n (1/q-1/p) > 0$. Given $v \in RH_\infty(\rn)$ such that $v^{-p'/p} \in \Ainf$  there exists $ C >0$ depending only on $n$, $p$, $q$, $[v^{-p/p}]_{\Ainf}$, and $[v]_{RH_\infty(\rn)}$ such that
$$
\left(\int_{\rn} |f(x)|^q  \mathcal{M}(g)(x)^{\Theta(1)q/n} v(x)^{q/p} \dx \right)^{1/q}\leq C \norm{g}{L^1(\rn)}^{\Theta(1)/n} \left(\int_{\rn} |\nabla f(x)|^p v(x) \dx \right)^{1/p}
$$
for every $f \in C_0^1(\rn)$ and $g \in L^1(\rn)$. 
\end{thm}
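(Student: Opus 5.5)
This result is Theorem~\ref{thm:Mg:p<q} specialized to $\alpha=1$, combined with the Hardy--Sobolev inequality \eqref{HS:pq}; the argument mirrors the proofs of Theorems~\ref{thm:SW:Ainf:p:q:alpha0=0:app} and \ref{thm:Mg:p<q:u:A1:app}.

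First, fix $g\in L^1(\rn)$ and set $u:=\mathcal{M}(g)^{\Theta(1)q/n}v^{q/p}$. The hypotheses $1<p\le q<n/\Theta(1)$, $\Theta(1)>0$, $v\in RH_\infty(\rn)$ and $v^{-p'/p}\in\Ainf$ are exactly those of Theorem~\ref{thm:Mg:p<q} with $\alpha=1$. We note that $0<\alpha=1<n$ holds since $n>1$. That theorem therefore gives
\[
[(u,v)]_{A^{1}_{p,q}}\le C\,\norm{g}{L^1(\rn)}^{\Theta(1)/n},
\]
with $C$ depending only on $n,p,q,[v^{-p'/p}]_{\Ainf},[v]_{RH_\infty(\rn)}$.

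It is worth recalling where that estimate comes from. The condition $\Theta(1)q<n$ together with \eqref{Mg:theta:A1} puts $\mathcal{M}(g)^{\Theta(1)q/n}$ in $A_1(\rn)$. By \eqref{RHinfty:powers} and \eqref{multipliers:Ainfty}, the factor $v^{q/p}$ is a multiplier of $\Ainf$, so $u\in\Ainf$. The bound on $[(u,v)]_{A^1_{p,q}}$ then follows from Theorem~\ref{thm:SW:Ainf:p:q} via \eqref{uq:vp:alpha0:infty:M}.

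Next, for $f\in C_0^1(\rn)$ we apply the pointwise bound \eqref{f:I1:grad:f}, $|f|\le C_nI_1(|\nabla f|)$. We then use the two-weight estimate \eqref{p:q:I:alpha} for $I_1$ with $h=|\nabla f|$. Since $u,v^{-p'/p}\in\Ainf$, the conditions \eqref{SW:p:q:r} and \eqref{SW:p:q} are equivalent, so \eqref{p:q:I:alpha} applies. This yields \eqref{HS:pq}:
\[
\norm{f}{L^q(u)}\le C[(u,v)]_{A^1_{p,q}}\norm{\nabla f}{L^p(v)}.
\]
Inserting the bound from the first step gives the claimed inequality, with constant independent of $g$.

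The only point needing care is that the constants be uniform in $g$. The $A_1$ constant of $\mathcal{M}(g)^{\Theta(1)q/n}$ depends only on $n$ and $\Theta(1)q/n<1$, so $[u]_{\Ainf}$, and with it the constants $N,r$ in the passage between \eqref{SW:p:q:r} and \eqref{SW:p:q}, are controlled independently of $g$. A degenerate case also needs checking: if $g=0$ a.e., then $u\equiv0$ and the inequality is trivial. Otherwise $\mathcal{M}(g)>0$ everywhere and finite a.e., so the weights are nontrivial and Theorem~\ref{thm:Ainfty:H} applies.
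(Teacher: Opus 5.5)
Your proposal is correct and follows the paper's route. The paper's proof is simply Theorem~\ref{thm:Mg:p<q} at $\alpha=1$ combined with the Hardy--Sobolev inequality \eqref{HS:pq}, exactly as you do, and your remarks on the uniformity of the constants in $g$ and on the degenerate case $g=0$ are sound details that the paper leaves implicit.
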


\begin{proof} It follows from Theorem~\ref{thm:Mg:p<q} with $\alpha =1$. 
\end{proof}

\begin{coro}\label{thm:Hardy:ineq:p:q:M}  Fix $n > 1$ and $1 < p  \leq q < n/\Theta(1)$ with $\Theta(1) :=1 + n (1/q-1/p) > 0$. Given $0 \leq \delta < n(p-1)$ there exists $C > 0$, depending only on $\delta$, $p$, $q$, and $n$, such that
\begin{equation}\label{Hardy:ineq:p:p:M}
\left(\int_{\rn} |f(x)|^q  \mathcal{M}(g)(x)^{\Theta(1)q/n} |x|^{q\delta /p} \dx \right)^{1/q}\leq C \norm{g}{L^1(\rn)}^{\Theta(1)/n} \left(\int_{\rn} |\nabla f(x)|^p |x|^\delta \dx \right)^{1/p}
\end{equation}
for every $f \in C_0^1(\rn)$ and $g \in L^1(\rn)$. 
\end{coro}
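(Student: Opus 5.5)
The plan is to specialize Theorem~\ref{thm:Mg:p<q:app} to the weight $v(x):=|x|^\delta$, exactly as the unnumbered corollary after Theorem~\ref{thm:Mg:p<q} did for $I_\alpha$. That theorem needs two hypotheses on $v$: $v\in RH_\infty(\rn)$ and $v^{-p'/p}\in\Ainf$. Once both hold, the inequality of Theorem~\ref{thm:Mg:p<q:app}, with $v^{q/p}=|x|^{q\delta/p}$, is precisely \eqref{Hardy:ineq:p:p:M}.

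First, I verify $|x|^\delta\in RH_\infty(\rn)$ for every $\delta\geq 0$. This is recorded in Remark~\ref{comments:A1:RHinfty}, and $[|x|^\delta]_{RH_\infty}$ depends only on $\delta$ and $n$. The case $\delta=0$ is trivial.

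Second, I check that $v^{-p'/p}=|x|^{-\delta p'/p}=|x|^{-\delta/(p-1)}$ lies in $\Ainf$. By the characterization of power weights in Section~\ref{sec:Ap:weights}, $|x|^a\in A_s(\rn)$ whenever $-n<a<n(s-1)$. Here $a=-\delta/(p-1)$, and $a>-n$ is equivalent to $\delta<n(p-1)$, which is the hypothesis. Since $a\leq 0$, in fact $|x|^a\in A_1(\rn)\subset\Ainf$, with $\Ainf$ constant depending only on $\delta,p,n$.

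Finally, I invoke Theorem~\ref{thm:Mg:p<q:app} with this $v$ under the standing assumptions $n>1$, $1<p\leq q<n/\Theta(1)$, $\Theta(1)>0$. Its constant depends on $n,p,q,[v^{-p'/p}]_{\Ainf},[v]_{RH_\infty}$, hence only on $\delta,p,q,n$, which gives the stated inequality for all $f\in C_0^1(\rn)$ and $g\in L^1(\rn)$.

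There is no real obstacle. The only points needing care are the exponent arithmetic showing $\delta<n(p-1)$ is exactly local integrability of $|x|^{-\delta p'/p}$, and confirming that the $RH_\infty$ constant of $|x|^\delta$ depends only on $\delta,n$. Both follow from the cited remarks.
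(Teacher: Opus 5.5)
Your proposal is correct and is essentially the paper's own proof: the paper obtains the corollary directly from Theorem~\ref{thm:Mg:p<q:app} with $v(x)=|x|^\delta$. Your checks that $|x|^\delta\in RH_\infty(\rn)$ (Remark~\ref{comments:A1:RHinfty}) and that $|x|^{-\delta/(p-1)}\in A_1(\rn)\subset\Ainf$ exactly when $\delta<n(p-1)$ spell out the hypothesis verification that the paper leaves implicit here and sketches in the analogous corollary after Theorem~\ref{thm:Mg:p<q}.
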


\begin{proof} The inequality \eqref{Hardy:ineq:p:p:M} follows from Theorem~\ref{thm:Mg:p<q:app} by taking $v(x)=|x|^\delta$.
\end{proof}

\end{document}